\newtheorem{theorem}{Theorem}[section]
\newtheorem{lemma}[theorem]{Lemma}
\newtheorem{corollary}[theorem]{Corollary}
\theoremstyle{definition}
\newtheorem{de}[theorem]{Definition}
\newtheorem{ex}[theorem]{Example}
\newtheorem{re}[theorem]{Remark}
\newtheorem{konst}[theorem]{Construction}
\newtheorem{definition}{Definition}
\def\e{\begin{equation}}
\def\ee{\end{equation}}
\def\Q{\mathbb Q}
\def\1{\frac{1}{2}}
\def\oMn{\overline{{\mathcal M}_{0,n}^{\mathbb R}}}
\def\R{{\mathbb R}}
\def\Th{W_{n-4}}
\def\Thh{W_1}
\def\ep{\varepsilon}
\begin{document}

\title{Computation of the first  Stiefel-Whitney class for the variety  $\overline{{\mathcal M}_{0,n}^{\mathbb R}}$\thanks{The work of the first author is partially supported by the grants NSh-1500.2014.2, RFBR 13-02-00478. The work of the second author is partially supported by the grant RFBR 12-01-00140}}
\author{N. Ya. Amburg, E. M. Kreines}
\date{}

\maketitle
\vspace{-6cm}
\begin{center}
\hfill ITEP-TH-33/14\\
\end{center}
\vspace{5cm}

\abstract
We compute the class $\Th(\overline{{\mathcal M}_{0,n}^{\mathbb R}})$ which is Poincar\'e dual to the first  Stiefel-Whitney  class for the variety  $\overline{{\mathcal M}_{0,n}^{\mathbb R}}$ in terms of the natural cell decomposition of $\overline{{\mathcal M}_{0,n}^{\mathbb R}}$ .

\section{Introduction}

Let $\overline{{\mathcal M}_{0,n}^{{\mathbb R}}}$  be the Deligne-Mumford compactification of the moduli space
of algebraic curves of genus 0 with $n$ marked and numbered  points.  In  \cite{EtingofCo,Dev,Kap} topology of this variety is studied. Many topological and algebraic characteristics of this variety are investigated. In particular, the structure of the cell decomposition is introduced. By means of this cell decomposition Euler characteristics of the variety is computed and Betti numbers are found.

This paper is devoted to the investigations of the first Stiefel-Whitney  class for the variety  $\overline{{\mathcal M}_{0,n}^{\mathbb R}}$. More precisely, we provide a natural geometric interpretation of the class
 $\Th(\overline{{\mathcal M}_{0,n}^{{\mathbb R}}})$, which is Poincar\'e dual to the first Stiefel-Whitney  class of $\overline{{\mathcal M}_{0,n}^{{\mathbb R}}}$ in terms of the natural cell decomposition of  $\overline{{\mathcal M}_{0,n}^{\mathbb R}}$.

The main result of the paper is the proof of the following two statements.
\begin{theorem} \label{them1}
Let $n\ge 5$, ${\mathcal M}_{0,n}^\R$  be the real moduli space of algebraic curves of the genus 0 with $n$ marked and numbered   points, say $\{1,2,\ldots, n\}$. Let $\overline{{\mathcal M}_{0,n}^\R}$ be its   Deligne-Mumford compactification. We consider the cell decomposition of $\overline{{\mathcal M}_{0,n}^\R}$, defined in the Construction \ref{l:oMn_cell_dec}. Then the class  $\Th(\overline{{\mathcal M}_{0,n}^\R})$ (which is Poincar\'e dual to the first Stiefel-Whitney class of  $\overline{{\mathcal M}_{0,n}^\R}$) consists exactly from those cells of co-dimension 1, that satisfy the condition: irreducible component of the curve which contains at most one point from the set $\{1, 2,3\}$ contains an odd number of points from the set $\{1,2,\ldots, n\}$.
\end{theorem}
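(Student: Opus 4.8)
The plan is to exploit the standard description of the Poincar\'e dual of $w_1$ for a closed manifold equipped with a cell decomposition in which it is a pseudomanifold: after choosing a local orientation on each top-dimensional (dimension $n-3$) cell, a cell $D$ of codimension $1$ lies in a mod-$2$ cycle representing $\Th(\oMn)=\mathrm{PD}(w_1)$ precisely when the two top cells adjacent to $D$ induce incompatible orientations on $D$, i.e.\ when the manifold fails to be orientable across $D$. Since every codimension-$1$ cell of $\oMn$ is incident to exactly two top cells (the two real smoothings $t>0$ and $t<0$ of the single node), this criterion applies verbatim, and the whole problem reduces to computing, for each boundary divisor, the sign of the orientation transition between its two adjacent chambers.

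To pin down orientations I would fix the residual $PGL_2(\R)$-freedom by normalizing the three marked points $z_1,z_2,z_3$ to $0,1,\infty$ on $\mathbb{RP}^1$; this uses all three real parameters of the M\"obius group and turns each open chamber into a region of $\R^{n-3}$ with coordinates $z_4,\dots,z_n$, which I orient by $dz_4\wedge\cdots\wedge dz_n$. This is a coherent choice on the open part $\Mn$ (which is therefore orientable), so all non-orientability is concentrated on the boundary; and it is exactly this normalization that singles out the set $\{1,2,3\}$ appearing in the statement --- the distinguished r\^ole of $\{1,2,3\}$ is an artifact of where we anchor the orientation.

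The core of the argument is then a local computation at a codimension-$1$ stratum, where the stable curve has two components $C'$ and $C''$ meeting at a node $p$, distributing the marked points into a set $A$ (on $C'$) and its complement $B$ (on $C''$). I would introduce adapted coordinates consisting of a smoothing parameter $t$ of the node together with the moduli of the two components, normalized separately, and describe the two adjacent chambers as $t>0$ and $t<0$. Passing from one chamber to the other amounts to $t\mapsto -t$ followed by the coordinate change that restores the global normalization $z_1,z_2,z_3\mapsto 0,1,\infty$; the sign of the Jacobian of this composite is the orientation-transition sign. A case split is forced by how $\{1,2,3\}$ distributes between $C'$ and $C''$: either all three lie on one component, or they split $2$--$1$, and in both cases exactly one component contains at most one of $\{1,2,3\}$. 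Tracking the sign should show that each marked point carried by that distinguished component contributes a reflection, so that the transition is orientation-reversing exactly when that component carries an odd number of points from $\{1,\dots,n\}$.

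The step I expect to be the main obstacle is precisely this Jacobian bookkeeping: writing the renormalization explicitly in the $2$--$1$ case, where the anchor points are split between the two bubbles and one must re-anchor each of them, and verifying that the accumulated sign collapses to the single parity invariant in the statement rather than depending on finer combinatorial data such as the node position or the point count on the other component. Once the sign is identified with the parity of the number of marked points on the component containing at most one of $\{1,2,3\}$, the theorem follows, and I would confirm the outcome against the case $n=5$, where $\oMn$ is the non-orientable surface $\#_5\mathbb{RP}^2$ and the criterion selects exactly the three walls separating two of $\{1,2,3\}$ from the remaining points.
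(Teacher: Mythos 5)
Your proposal is correct and follows essentially the same route as the paper's own proof: the paper likewise orients the top cells by the normalization $z_1=0$, $z_2=1$, $z_3=\infty$, invokes the cell-level formula $W_{n-4}=\left(\frac12\sum\partial\overline{k_j}\right)\bmod 2$ to reduce the theorem to deciding whether the orientation transition across each codimension-one wall is reversing, and then performs exactly the wall-crossing Jacobian computation you outline, realizing your smoothing parameter $t$ as the parameter $\varepsilon$ of the hyperbola $xy=\varepsilon$ with two anchored points on each branch, and splitting into the same $3$--$0$ and $2$--$1$ cases for how $\{1,2,3\}$ distributes. The sign bookkeeping you deferred comes out precisely as you predicted: near $\varepsilon=0$ the Jacobian is a positive factor times $\pm\varepsilon^{\,n-m-2}$ (respectively $\pm\varepsilon^{-(n-m)}$) by the paper's formulas (\ref{eq:J>}) and (\ref{eq:J}), so the transition is orientation-reversing exactly when the component carrying at most one of $\{1,2,3\}$ has an odd number of marked points.
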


\begin{corollary}
Let $n\ge 6$ be even and let $\overline{{\mathcal M}_{0,n}^{\R}}$ be the Deligne-Mumford compactification of the moduli space
of algebraic curves of genus 0 with $n$ marked and numbered points. We consider the cell decomposition of $\overline{{\mathcal M}_{0,n}^\R}$, defined in the Construction \ref{l:oMn_cell_dec}. Then the class  $\Th(\overline{{\mathcal M}_{0,n}^\R})$ consists exactly from the cells of co-dimension 1, such that each  irreducible component of the curve  contains an odd number of the marked points. 
\end{corollary}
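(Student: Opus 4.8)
The plan is to deduce the Corollary directly from Theorem \ref{them1} by a short parity argument that uses the hypothesis that $n$ is even. First I would recall the geometric meaning of a codimension-$1$ cell in the decomposition of Construction \ref{l:oMn_cell_dec}: such a cell corresponds to a stable curve with exactly one node, hence to a pair of $\mathbb{P}^1$-components meeting at a single point. This node splits the marked points $\{1,2,\ldots,n\}$ into two disjoint subsets $A$ and $B$ carried by the two components, with $|A|+|B|=n$ and, by stability, $|A|,|B|\ge 2$. So a codimension-$1$ cell is completely described by such a partition $\{1,2,\ldots,n\}=A\sqcup B$.

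The crux of the argument is the observation that, since $n$ is even, $|A|+|B|$ is even, so $|A|$ and $|B|$ have the same parity. In particular $|A|$ is odd if and only if $|B|$ is odd; equivalently, one component carries an odd number of marked points exactly when both do. Next I would check that the ``distinguished'' component appearing in Theorem \ref{them1} is well defined for each such cell. The three labels $\{1,2,3\}$ are distributed between the two components in one of the patterns $(3,0)$, $(2,1)$, $(1,2)$, $(0,3)$; since $3$ is odd the distribution is never balanced, and in each of these cases exactly one component contains at most one point of $\{1,2,3\}$. Hence the component singled out in the statement of Theorem \ref{them1} is uniquely determined for every codimension-$1$ cell.

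Finally I would invoke Theorem \ref{them1}: a codimension-$1$ cell lies in $\Th(\oMn)$ precisely when this distinguished component carries an odd number of marked points from $\{1,2,\ldots,n\}$. By the same-parity observation above, for even $n$ this condition is equivalent to \emph{both} components carrying an odd number of marked points, that is, to the requirement that each irreducible component of the curve contain an odd number of marked points. Since a codimension-$1$ stable curve has exactly two irreducible components, this is exactly the description of $\Th(\oMn)$ asserted by the Corollary, which completes the deduction.

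As for difficulty, essentially all of the content is already packaged into Theorem \ref{them1}; the derivation of the Corollary is purely combinatorial bookkeeping. The only two points that require care are the uniqueness of the ``at most one point of $\{1,2,3\}$'' component (guaranteed by the oddness of $3$) and the validity of the same-parity reduction (which is precisely where the evenness of $n$ is used, and the reason the hypothesis $n\ge 6$ even cannot be dropped). I do not anticipate any genuine obstacle beyond these two verifications.
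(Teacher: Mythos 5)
Your deduction is correct and is exactly the argument the paper intends: the Corollary is stated as an immediate consequence of Theorem \ref{them1}, and the implicit proof is precisely your parity bookkeeping (for even $n$ the two components of a one-nodal curve carry marked-point sets of equal parity, and exactly one component contains at most one of $\{1,2,3\}$ since $3$ is odd). Nothing further is needed.
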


The first Stiefel-Whitney class of this variety was firstly computed in~\cite{Ceyhan}. In this work we point out another representative for the class which is Poincar\'e dual to the  first Stiefel-Whitney class of  $\overline{{\mathcal M}_{0,n}^{\mathbb R}}$. The advantage of our method is the direct application of the corresponding cell decomposition structure. We provide easy combinatorial and geometric characterization of cells which provides the possibility to determine if the cell lies in the class under consideration or not.

Further we plan to apply the obtained characterization for geometric interpretation of generators and relations in the cohomology algebra $H^*(\overline{{\mathcal M}_{0,n}^\R},\Q)$. We are going to prove their close connections with  those that where find by Keel  \cite{Keel} in the complex case and to point out the principal difference with the complex case.

Our work is organized as follows: Section 2 contains necessary definitions and detailed description of the cell decomposition of the variety   $\overline{{\mathcal M}_{0,n}^{\mathbb R}}$. This theory is illustrated by the description of the cell decomposition of  $\overline{{\mathcal M}_{0,5}^{\mathbb R}}$, which includes the number of cells, adjacency types, graphic illustration of the adjacency. Section 3 is devoted to the computation of the  first Stiefel-Whitney class separately for  $n=5$ and $n\ge 6$.

\section{Deligne-Mumford compactification of the space  ${\mathcal M}_{0,n}^{\mathbb R}$} 
\subsection{Stable curves}

Following the paper \cite{EtingofCo}, we define real stable curves as ``cacti-like'' structures, i.e. 3-dimensional  ``trees'' (in the graph theoretical sense), consisting of flat circles with the points $\{1,2,\ldots,n\}$ on them:
 
\begin{definition} \label{SC} \cite{DM}

A {\em stable curve\/} of genus 0 with $n$ marked points over the field of real numbers  $\R$ is a finite union of real projective lines 
$C=C_1\cup C_2\cup \ldots\cup C_p$  with $n$ different marked points  $z_1,z_2,\ldots,z_n\in C$, if the following conditions hold.
\begin{enumerate}
\item For each point  $z_i$ there exists the unique line  $C_j$, such that $z_i\in C_j$.
\item For any pair of lines $C_i\cap C_j$ is either empty or consists of one point, and in the latter
case the intersection is transversal.
\item \label{it3} The graph corresponding to  $C$ (the lines $C_1,C_2,\ldots,C_p$ correspond to the vertices; two vertices are incident to the same edge iff corresponding lines have non-empty intersection) is a tree.   
\item  \label{it4} The total number of special points (i.e. marked points or intersection
points) that belong to a given component $C_j$ is at least 3 for each $j=1,\ldots,p$.
\end{enumerate}
We say that  $p$ is the  {\em number of components\/} of the stable curve.
\end{definition}
\begin{figure}[h]
\centering
\epsfig{figure=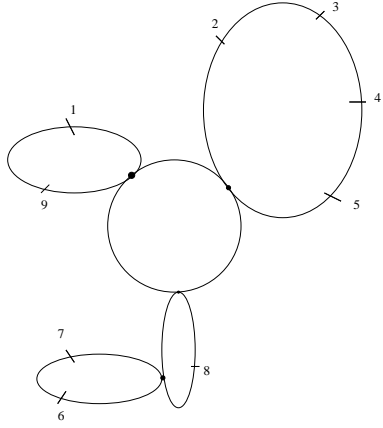,width=.30\linewidth}
\caption{A stable curve over  $\mathbb R$ of genus  $0$ with $9$ marked points}
\label{exstcurve1}
\end{figure}
\begin{definition}
Let  $C=(C_1,C_2,\ldots,C_p,z_1,z_2,\ldots,z_n)$ and $C'=(C'_1,C'_2,\ldots,C'_p,z'_1,z'_2,\ldots,z'_n)$ be stable curves of genus 0 with $n$ marked and numbered points. $C$ and $C'$ are called {\em equivalent\/} if there exists an isomorphism of algebraic curves $f:C\to C'$ such that   $f(z_i)=z'_i$ for all $i=1,\ldots , n$.
\end{definition}

\subsection{Moduli space $\overline{{\mathcal M}_{0,n}^{\mathbb R}}$}
\begin{definition}
Let  $n\ge 3$. {\em Deligne-Mumford compactification\/}  of the moduli space of genus 0 real algebraic curves with $n$ marked points 
$\overline{{\mathcal M}_{0,n}^{\mathbb R}}$  is the set of equivalence classes of the genus 0 stable curves with $n$ marked points defined over ${\mathbb R}$.
\end{definition}
\begin{theorem}\cite{Dev}
Let $n>4$. Then the space $\oMn$ is a non-orientable compact variety of real dimension  $\dim(\oMn)=n-3$.
\end{theorem}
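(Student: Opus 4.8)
The plan is to establish the three assertions — dimension, compactness together with the smooth manifold structure, and non-orientability — separately, the last being by far the hardest.

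First I would compute the dimension by gauge-fixing. The group $PGL_2(\R)$, of dimension $3$, acts on configurations of $n$ distinct points of $\mathbb{RP}^1$, and the stabilizer of any ordered triple of distinct points is trivial; placing $z_1,z_2,z_3$ at $0,1,\infty$, the remaining coordinates $(z_4,\dots,z_n)$ give a local homeomorphism from an open subset of $(\mathbb{RP}^1)^{n-3}$ onto $\Mn$, so $\dim \Mn = n-3$. The boundary strata added in the compactification arise by letting subsets of points collide and bubble off onto new components, a codimension-$\ge 1$ phenomenon, so $\dim \oMn = n-3$ as well. For compactness and the manifold structure I would use that $\oMn$ is the real locus of the complex Deligne--Mumford space $\overline{{\mathcal M}_{0,n}}$: the latter is an iterated blow-up of $\mathbb{CP}^{\,n-3}$ \cite{Kap}, hence a smooth compact complex variety, its real locus is closed in a compact Hausdorff space and therefore compact, and the fixed locus of an anti-holomorphic involution on a smooth complex manifold is a smooth closed real manifold of real dimension equal to the complex dimension $n-3$. (Alternatively, compactness is immediate from the finite cell decomposition of Construction \ref{l:oMn_cell_dec}, a finite CW complex being compact.)

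For non-orientability I would treat $n=5$ first, by an Euler-characteristic parity argument. Here $\oMf$ is a closed surface tiled by copies of the associahedron $K_4$, i.e.\ by pentagons. The number of $2$-cells equals the number of cyclic orders of five labelled points up to the dihedral action, $F=5!/(2\cdot5)=12$; the number of $0$-cells equals the number of trivalent trees with five labelled leaves (the maximally degenerate curves), $V=(2\cdot5-5)!!=15$; and since each pentagon has five edges and each edge bounds exactly two $2$-cells, the relation $2E=5F$ gives $E=30$. Hence $\chi(\oMf)=V-E+F=15-30+12=-3$. A closed orientable surface has even Euler characteristic $2-2g$, so the odd value $-3$ forces $\oMf$ to be non-orientable (in fact $\chi=-3$ identifies it with the connected sum of five copies of $\mathbb{RP}^2$).

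To pass from $n=5$ to arbitrary $n>5$ I would use the forgetful map $\pi:\oMn\to\oMf$ that remembers only the points $1,\dots,5$. This is a surjective submersion in the interior and admits a section $\sigma:\oMf\to\oMn$ (for instance one of the boundary sections placing each forgotten point infinitely near a fixed marked point). Over the interior the section splits the pullback of the tangent bundle as $\sigma^{*}T\oMn\cong T\oMf\oplus N$, with $N$ the normal (vertical) bundle of the section, so by the Whitney formula $\sigma^{*}w_1(\oMn)=w_1(\oMf)+w_1(N)$. Since $w_1(\oMf)\neq 0$ by the previous paragraph, it would suffice to show $w_1(N)=0$, whence $w_1(\oMn)\neq 0$ and $\oMn$ is non-orientable. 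The main obstacle is precisely this last point, compounded by the fact that $\pi$ fails to be a submersion along the boundary divisors: one must control the normal bundle of the section globally — it encodes the twisting of the forgotten-point directions over $\oMf$ — and show it is orientable. A more robust alternative, which avoids the bundle bookkeeping, is to exhibit a single explicit loop in $\oMn$ along which the orientation is reversed, most naturally a loop supported on a two-dimensional slice modelled on $\oMf$ and crossing the relevant codimension-one cells an odd number of times; verifying the sign of the return map on the tangent space is then the \emph{crux}, and it is exactly the local computation that the paper's main theorem performs globally.
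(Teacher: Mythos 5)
This theorem is not proved in the paper at all: it is imported from Devadoss \cite{Dev} as background, so there is no internal proof to compare against, and I judge your argument on its own merits. Your dimension count (gauge-fixing $z_1,z_2,z_3$ to $0,1,\infty$ via the simply transitive action of $PGL_2(\R)$ on triples), your compactness and smoothness argument via the real locus of the complex Deligne--Mumford space (or via the finite cell structure of Construction \ref{l:oMn_cell_dec}), and your $n=5$ case are all correct. In particular your counts $F=12$, $E=30$, $V=15$ agree with the cell decomposition the paper uses (Example \ref{ex1} gives the $12$ top-dimensional pentagons), so $\chi(\oMf)=15-30+12=-3$, and oddness of the Euler characteristic does force this closed surface to be non-orientable.

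For $n>5$, however, there is a genuine gap, which you yourself flag. Knowing that $\oMn$ contains an embedded non-orientable copy of $\oMf$ proves nothing by itself: $\mathbb{RP}^2$ embeds in the orientable $\mathbb{RP}^3$, precisely because $w_1$ of the normal bundle cancels $w_1(T\mathbb{RP}^2)$ in the Whitney formula. So your identity $\sigma^{*}w_1(\oMn)=w_1(\oMf)+w_1(N)$ yields nothing until $w_1(N)$ is actually computed, and nothing in your argument rules out the fatal cancellation $w_1(N)=w_1(\oMf)$. Worse, for the section you propose --- iterated boundary sections, whose image is a codimension-$(n-5)$ boundary stratum --- the normal bundle $N$ is a direct sum of $n-5$ real line bundles of node-smoothing deformations, and the (non-)orientability of exactly such line bundles is the delicate point in real moduli spaces: boundary strata there are frequently one-sided, which is the phenomenon behind Ceyhan's representation \cite{Ceyhan} of the dual of $w_1$ by boundary classes, and behind the paper's own wall-crossing Jacobians (formulas (\ref{eq:J>}) and (\ref{eq:J})), whose sign flips across $\ep=0$ exactly when the relevant component carries an odd number of marked points. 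In other words, the ``crux'' you defer is not a finishing detail but the entire content of the theorem for $n\ge 6$; as written, your proof establishes the statement only for $n=5$.
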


\subsection{Cell decomposition of  $\overline{{\mathcal M}_{0,n}^{\mathbb R}}$}
\begin{re}
There exists a natural structure of  cell decomposition for the space $\overline{{\mathcal M}_{0,n}^{\mathbb R}}$. This structure is described for example in the works \cite{Dev,Kap}. 
\end{re}
\begin{konst} \cite{Dev,Kap}  \label{l:oMn_cell_dec}
{\bf Description of the cell decomposition of the compactified space $\overline{{\mathcal M}_{0,n}^{\mathbb R}}$. }
Following \cite{Dev}, let us consider a right $n$-gon, possibly, with several non-intersecting diagonals. We label different cells of the moduli space $\overline{{\mathcal M}_{0,n}^{\mathbb R}}$  by such $n$-gons, sides  of which correspond to the marked points and are labeled by  $z_1,\ldots, z_n$. Here, the polygons which can be transformed to each other by the dihedral group action label the same cell. The cells of the maximal dimension are labeled by $n$-gons without diagonals. The cells of codimension 1 are labeled by $n$-gons with one diagonal. Note that these cells consist exactly of 2-component stable curves. The cells of codimension 3, i.e., that correspond to 3-component curves, are labeled by  $n$-gons with 2 diagonals. In general, a cell of codimension $k$ is labeled by an  $n$-gon $M$ with $k$ diagonals. These diagonals divide $M$ into $k+1$ polygons $M_1,\ldots, M_{k+1}$. The edges of $M_1,\ldots, M_{k+1}$, which are the edges of $M$, are labeled by the points marking the components of the curve. Note that the condition  \ref{it3} guarantees that different diagonals do not intersect outside the vertices of  $M$. The condition \ref{it4} guarantees that each of the polygons  $M_1,\ldots, M_{k+1}$ has at least 3 sides, i.e., it is a polygon.

Following \cite{Dev}, we denote by ${\mathcal G}^L(n,k)$ the set of $n$-gons with $k$ non-intersecting diagonals and labels  $z_1,\ldots, z_n$ on the edges.
\end{konst}

\begin{de}
A {\em twist\/} of $M \in {\mathcal G}^L(n,k)$ along a diagonal $d$ is the $n$-gon $M' \in {\mathcal G}^L(n,k)$, obtained from $M$ by cutting $M$ along  $d$,  then $180^\circ$ rotation  of one (any one) of the parts relative to the axis orthogonal to $d$ in the plane of the $n$-gon, and gluing the obtained two parts along~$d$. 
\end{de}
\begin{re}
Let $M'$ be the twist of $M$, let labels of the sides of  $M$ be ordered as  $z_1,\ldots,z_n$, and let the sides marked by $z_1,\ldots,z_k$ be separated by $d$ from the sides marked by $z_{k+1},\ldots, z_n$. Then the sides of $M'$ have ordered labels $z_1,\ldots,z_k,z_{n}, z_{n-1},\ldots, z_{k+1}$.  
\end{re}

\begin{konst} \cite{Dev,Kap}  \label{l:oMn_cell_dec1}
{\bf  Description of the cell decomposition of the compactified space $\overline{{\mathcal M}_{0,n}^{\mathbb R}}$. Prolongation. }
Let polygons $M$ and $M'\in {\mathcal G}^L(n,k)$ can be transformed to each other by the series of twists. Then $M$ and $M'$ mark the same cell. 
\end{konst}
\begin{re}
Special charm of this construction is that marked points and singular points (the points of intersection of different components of a curve) do not have principal differences. Namely, both of them are denoted by edges of a polygon. Also, each component  $C_i$ of the curve (as well as a connected union of several components) is denoted by a polygon. This polygon marks the cell of the cell decomposition (for the moduli space of a smaller dimension), which contain $C_i$.
\end{re}
\begin{ex} \label{ex1}
Cell decomposition of $\overline{{\mathcal M}_{0,n}^{\mathbb R}}$ contains $\frac{(n-1)!}{2}$ cells of the maximal dimension $n-3$.
\end{ex}
\begin{ex}
$\overline{{\mathcal M}_{0,4}^{\mathbb R}}$ is a circle consisting of 3 cells of the dimension  $1$ and 3 cells of the dimension~$0$.
Figure \ref{M_0,4} represents the cell decomposition of $\overline{{\mathcal M}_{0,4}^{\mathbb R}}$. Nearby each cell we provide its ``typical'' representative, i.e., one of the stable curves which constitute this cell.
\begin{figure}[h]
\centering
\epsfig{figure=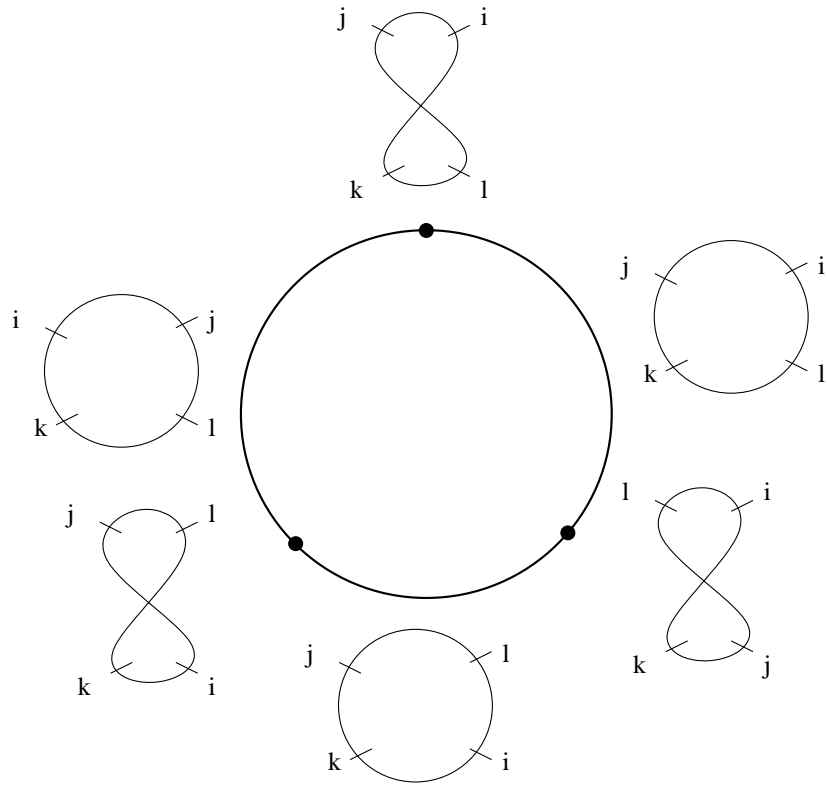,width=.65\linewidth}
\caption{$\overline{{\mathcal M}_{0,4}^{\mathbb R}}$.}
\label{M_0,4}
\end{figure}
\end{ex}

\subsection{Cell decomposition of the variety  $\overline{{\mathcal M}_{0,5}^{\mathbb R}}$}
By Example \ref{ex1} found cell decomposition of $\overline{{\mathcal M}_{0,5}^{\mathbb R}}$ consists of 12 cells of the maximal dimension. Each cell is labeled by a pentagon. The cell  labeled by the pentagon with sides marked by the symbols 1, 2, 3, 4, 5 is represented at Figure \ref{fig5_1}. All curves which are in this cell have the form shown at Figure \ref{fig5_1} inside the pentagon. Outside the pentagon the stable curves are shown that are obtained by moving to each of the boundaries. At the Figure \ref{Klgr0} two cells of codimension 1 are shown. The cell marked by the letter $A$ corresponds to the lower edge of the pentagon drawn  at Figure \ref{fig5_1}. The cell marked by $B$    corresponds to the next edge in the counterclockwise order.

\begin{center}

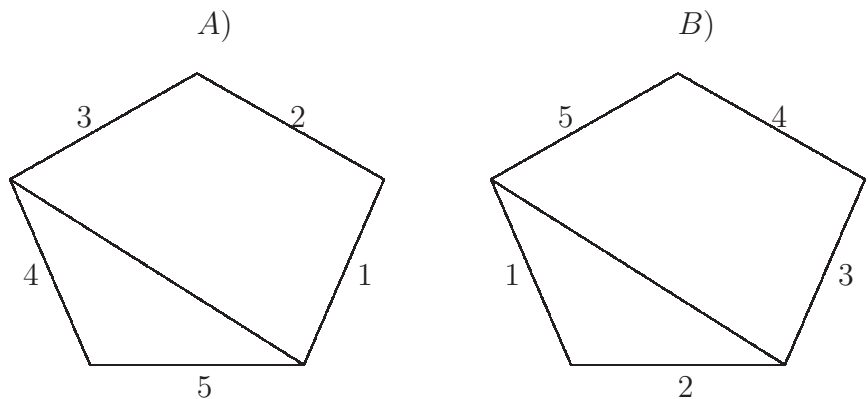
\begin{figure}[h]
\centering
\begin{picture}(260,140)
\put(0,0){\line(1,0){80}}
\put(180,0){\line(1,0){80}}
\qbezier{(-30,70)(-15,35)(0,0)}
\qbezier{(110,70)(95,35)(80,0)}
\qbezier{(-30,70)(5,90)(40,110)}
\qbezier{(40,110)(75,90)(110,70)}
\qbezier{(150,70)(165,35)(180,0)}
\qbezier{(290,70)(275,35)(260,0)}
\qbezier{(150,70)(185,90)(220,110)}
\qbezier{(220,110)(255,90)(290,70)}
\qbezier{(150,70)(205,35)(260,0)}
\qbezier{(-30,70)(25,35)(80,0)}
\put(40,125){$A)$}
\put(220,125){$B)$}
\put(40,-12){$5$}
\put(100,30){$1$}
\put(75,90){$2$}
\put(-5,90){$3$}
\put(-25,30){$4$}
\put(220,-12){$2$}
\put(280,30){$3$}
\put(255,90){$4$}
\put(175,90){$5$}
\put(155,30){$1$}
\end{picture}
\caption{Forms of the boundary cells}
 \label{Klgr0}
\end{figure}

\end{center}

Two adjoint cells are shown at Figure  \ref{fig5_2}.
Adjoining  between all 12 cells is shown at Figure \ref{fig5_3}.
\begin{figure}[h]
\centering
\epsfig{figure=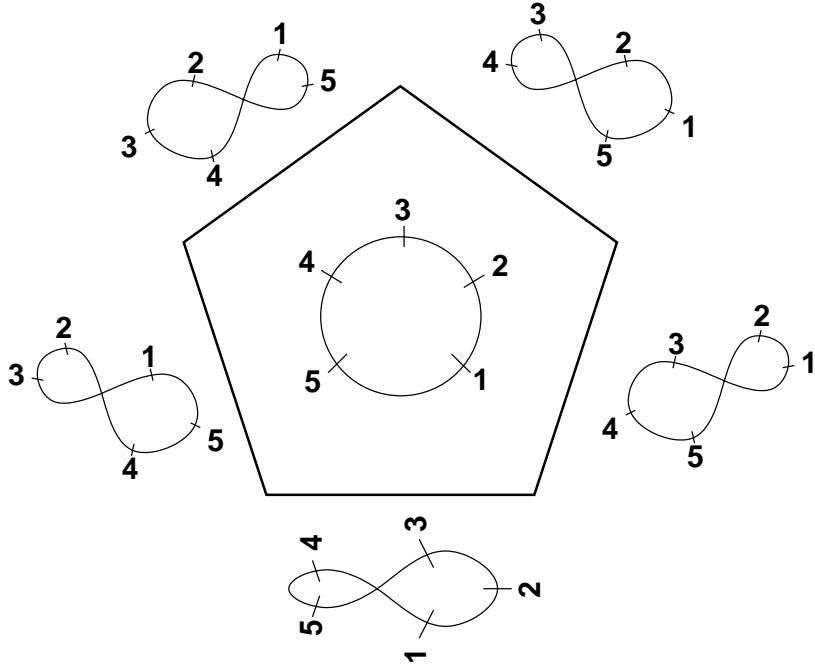,width=.65\linewidth}
\caption{One of cells of $\overline{{\mathcal M}_{0,5}^{\mathbb R}}$.} \label{fig5_1}
\end{figure}
\begin{figure}[t] 
\centering
\epsfig{figure=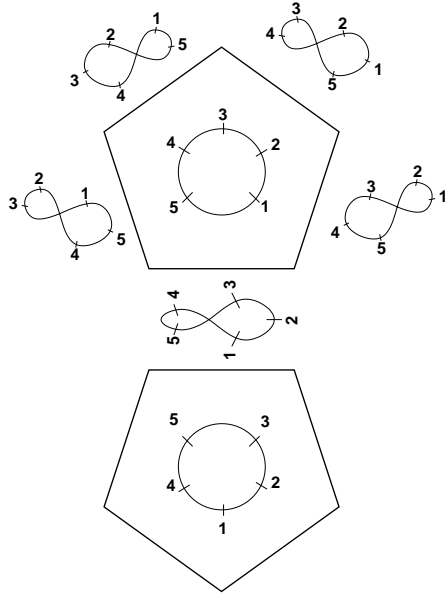,width=.35\linewidth}
\caption{Two adjoint  cells of $\overline{{\mathcal M}_{0,5}^{\mathbb R}}$.} \label{fig5_2}
\end{figure}
\begin{figure}[t] 
\centering\epsfig{figure=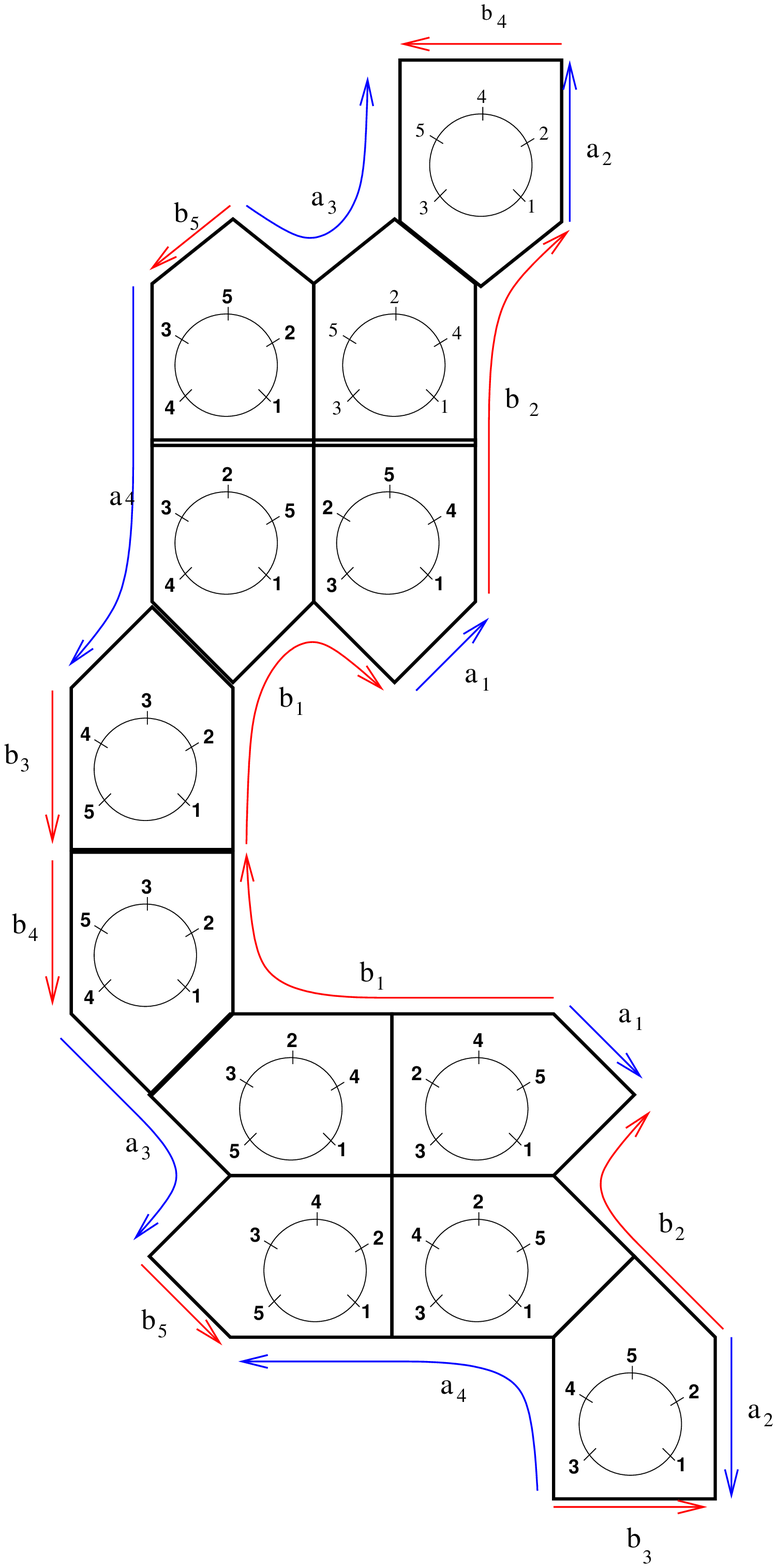,width=.35\linewidth}
\caption{$\overline{{\mathcal M}_{0,5}^{\mathbb R}}$} \label{fig5_3}
\end{figure}
\begin{figure}[t]
\centering\epsfig{figure=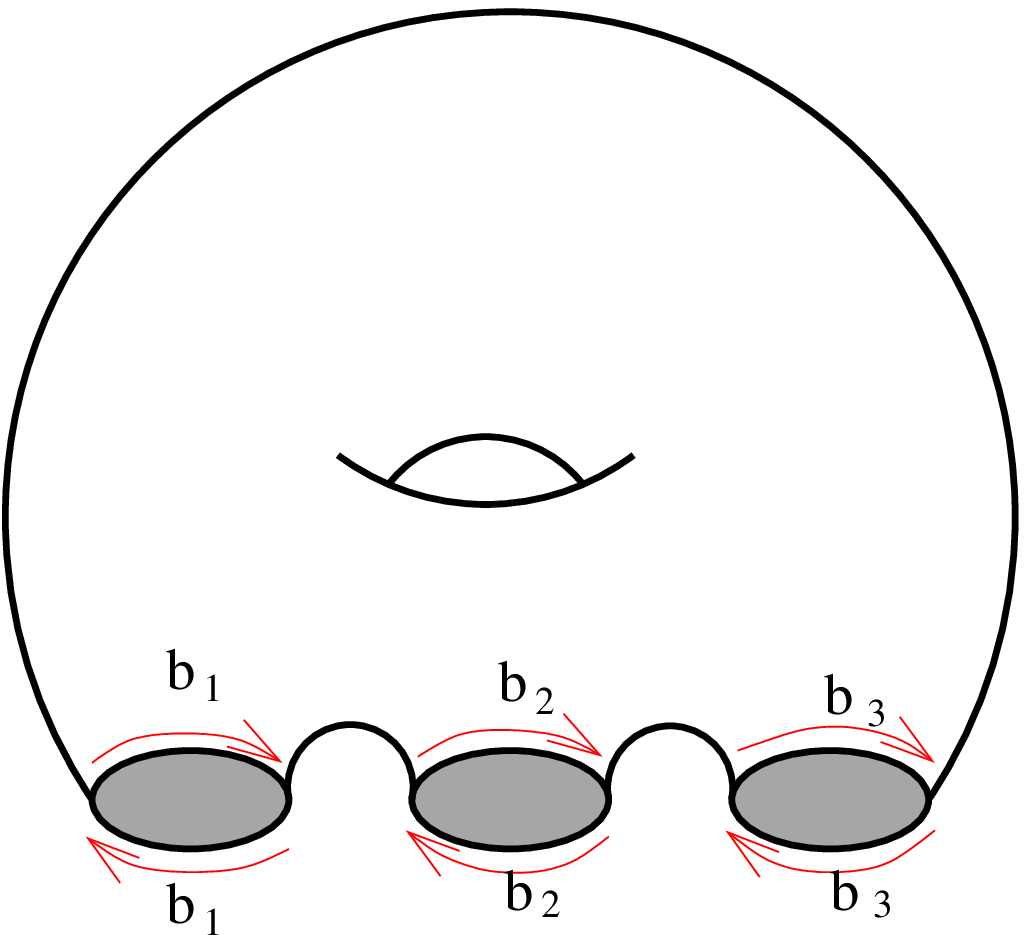,width=.35\linewidth}
\caption{$\overline{{\mathcal M}_{0,5}^{\mathbb R}}$}  \label{fig5_4}
\end{figure}

\section{ Stiefel-Whitney class of $\overline{{\mathcal M}_{0,n}^{\mathbb R}}$}
\subsection{Some general remarks}

\begin{re}
We use the following theorem in order to compute the homological class $\Th$, which is Poincar\'e dual to the first  Stiefel-Whitney class of the variety $\overline{{\mathcal M}_{0,n}^{\mathbb R}} $. For the computations we use introduced structure of the natural cell decomposition of $\overline{{\mathcal M}_{0,n}^{\mathbb R}}$.
\end{re}

\begin{theorem} \label{thmShU}  \cite[p. 119]{Mil},~\cite{HT,Ceyhan} 
Let  $M$ be a smooth compact variety without a boundary, $K$ be a cell decomposition of  $M$, $k_j\subset K$ denote the cells of the maximal dimension $d$. Let us fix the orientation on the cells of maximal dimension $\overline{k_j}$.
Then homological class dual to the first  Stiefel-Whitney class of $M$ can be represented in the form 
\begin{equation} \label{hru} W_{d-1}(M)=\left( \frac{1}{2}\sum \partial \overline{k_j} \right) \mod 2.\end{equation}
\end{theorem}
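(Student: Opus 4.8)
The plan is to manufacture, out of the chosen orientations, an explicit $(d-1)$-cycle supported precisely on the cells across which these orientations fail to glue coherently, to prove that this cycle is Poincar\'e dual to $w_1(M)$, and finally to recognise it as the right-hand side of (\ref{hru}). Throughout I would use that $M$, being a smooth compact manifold without boundary, is a pseudomanifold: every cell $\sigma$ of codimension $1$ is a face of \emph{exactly two} top-dimensional cells, which I denote $k_a$ and $k_b$.

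First I would carry out the incidence computation. Having fixed orientations on the $\overline{k_j}$, the integral cellular boundary is $\partial\overline{k_j}=\sum_\sigma[\overline{k_j}:\sigma]\,\sigma$, where for a codimension-$1$ face $\sigma$ the incidence number $[\overline{k_j}:\sigma]\in\{+1,-1\}$ compares the boundary orientation induced on $\sigma$ with a fixed reference orientation of $\sigma$. Summing over all top cells, the coefficient of a given $\sigma$ in $\sum_j\partial\overline{k_j}$ is $[\overline{k_a}:\sigma]+[\overline{k_b}:\sigma]\in\{-2,0,2\}$: it vanishes exactly when the two top cells induce opposite boundary orientations on $\sigma$, i.e.\ when their chosen orientations combine into a coherent orientation across $\sigma$, and equals $\pm2$ precisely when the orientations disagree (an \emph{orientation flip}). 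Hence
\begin{equation}\label{Rchain} R:=\frac{1}{2}\sum_j\partial\overline{k_j} \end{equation}
is a genuine \emph{integral} $(d-1)$-chain, each of whose coefficients is $0$ or $\pm1$, supported exactly on the flip cells; and since $\partial\partial=0$, the chain $R$ is an integral cycle, so its reduction mod $2$ is a $\mathbb{Z}/2$-cycle.

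Next I would verify that the mod-$2$ class of $R$ does not depend on the chosen orientations and identify it with $W_{d-1}(M)$. Reversing the orientation of a single $\overline{k_a}$ flips every incidence number $[\overline{k_a}:\sigma]$ and thus changes $R$ by $-\partial\overline{k_a}$, a boundary; hence the homology class is unchanged. To see that $[R]\bmod 2$ is Poincar\'e dual to $w_1(M)$, I would invoke the characterisation of the first Stiefel-Whitney class by its values on loops: for a loop $\gamma$ in general position with respect to the codimension-$1$ skeleton, $\langle w_1(M),\gamma\rangle$ equals, mod $2$, the number of reversals of a local orientation of $M$ transported once around $\gamma$. Passing along $\gamma$ from one top cell to the next, the local orientation is reversed exactly upon crossing a flip cell, i.e.\ a cell on which $R$ has nonzero coefficient; so the reversal count mod $2$ is the mod-$2$ intersection number $\langle R,\gamma\rangle$. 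As loops generate $H_1(M;\mathbb{Z}/2)$ and Poincar\'e duality is a perfect pairing, this forces $[R]\bmod 2=W_{d-1}(M)$, which is precisely formula (\ref{hru}).

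The main obstacle is the step identifying the flip locus with $w_1$: one must confirm that the cochain assigning to each codimension-$1$ cell its flip value genuinely represents the first Stiefel-Whitney class, which is the cellular incarnation of the obstruction-theoretic definition (cf.\ \cite{Mil}). The delicate point is \emph{coherence around the codimension-$2$ skeleton}: around each codimension-$2$ cell the incident top cells form a cyclic chain, and the product of the gluing signs over one loop of this chain must be trivial. This is exactly the geometric content of $\partial R=0$, and it is what guarantees simultaneously that $R$ is a cycle and that the reversal count along $\gamma$ is independent of the chosen perturbation. Establishing this local coherence, together with the routine transversality adjustments needed to bring an arbitrary loop into general position with the skeleton, is the technical heart of the argument; the incidence bookkeeping of the first two paragraphs is then purely formal.
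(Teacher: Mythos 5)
Your proposal is correct, but note that the paper itself contains no proof of this statement: Theorem \ref{thmShU} is quoted as a known result with references to \cite[p.~119]{Mil}, \cite{HT} and \cite{Ceyhan}, so there is nothing internal to compare against. Your argument is a sound, self-contained reconstruction of the standard proof underlying those references: define the integral chain $R=\frac12\sum_j\partial\overline{k_j}$, observe its coefficients lie in $\{0,\pm1\}$ and detect exactly the codimension-$1$ cells across which the chosen orientations fail to glue, note $\partial R=0$ formally from $\partial\partial=0$ (which, as you correctly say, encodes the evenness of the number of orientation flips around each codimension-$2$ cell), check independence of the orientation choices up to boundaries, and identify $[R]\bmod 2$ with the Poincar\'e dual of $w_1$ via orientation holonomy along loops, using that $H_1(M;\mathbb{Z}/2)$ is generated by loops and that the mod-$2$ intersection pairing on a closed manifold is perfect. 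This is precisely the obstruction-theoretic characterisation of $w_1$ as the orientability obstruction, and it is also the mechanism the authors exploit later (Lemmas \ref{reShU}--\ref{K23} and the $n\ge 6$ lemmas), where membership of a codimension-$1$ cell in $\Th$ is decided by whether the two adjacent top cells induce the same or opposite orientations.

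One point deserves more care than your write-up gives it: the claim that every codimension-$1$ cell is a face of exactly two top cells with incidence numbers $\pm1$ is not automatic for an arbitrary CW decomposition (a top cell may approach a codimension-$1$ cell from both sides, producing incidence $0$ or $\pm2$, which would break the ``coefficients in $\{0,\pm1\}$'' step). You need a regularity hypothesis on $K$, or at least that closures of top cells meet each codimension-$1$ cell from one side only; this holds for the polygonal decomposition of $\oMn$ used in the paper, but should be stated as a hypothesis if you intend the argument in the generality of the theorem as quoted. With that caveat made explicit, your proof is complete and supplies a justification the paper delegates to the literature.
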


The main idea of the proof of Theorem \ref{them1} is to introduce global coordinates on the moduli space  ${{\mathcal M}_{0,n}^{\mathbb R}}$ which determine the orientation in each cell. Certainly, these coordinates can be prolonged till the boundary for some cells, but not all cells. If the introduced coordinates can not be prolonged till the boundary, we introduce some other coordinates, which can be prolonged till that boundary, and compute the Jacobian of the transition functions between different coordinates. Then we can determine if two adjoint cells induce the same orientations on their common boundary, or the opposite ones. If two adjoint cells induce the opposite orientations on their common boundary, then their influences in the formula (\ref{hru}) annihilate each other. Otherwise, we add the influences. Then after dividing by 2 and considering the result modulus 2 we get that the cell is in the class $\Th(\overline{{\mathcal M}_{0,n}^{\mathbb R}})$ with the coefficient 1. So, the cell is in $\Th(\overline{{\mathcal M}_{0,n}^{\mathbb R}})$ iff it is a common boundary of two cells of the maximal dimension, which have the same orientations (in the global coordinates determined in the open part of the moduli space). 

We are going to realize this program.

In order to find the first  Stiefel-Whitney class of the variety $\overline{{\mathcal M}_{0,n}^{\mathbb R}} $ we fix one of the possible orientations for the maximal dimension cells of the space ${{\mathcal M}_{0,n}^{\mathbb R}}$.

\begin{de} \label{DefKoord}
A {\it coordinate map} on the space ${{\mathcal M}_{0,n}^{\mathbb R}}$ is  the map $\varphi: {{\mathcal M}_{0,n}^{\mathbb R}}\to {\mathbb R}^{n-3}$.

Let  $({\mathbb P}_1(\R), z_1,\ldots,z_n )\in {{\mathcal M}_{0,n}^{\mathbb R}}$, $z_i\in {\mathbb P}_1(\R)$, i.e., we consider 1-component curve, which is a projective line, with $n$ marked points on it. Since the points $ z_1,\ldots,z_n \in {\mathbb P}_1(\R)$ are determined up to the linear-fractional transformation, we can and we do fix $z_1=0,z_2=1,z_3=\infty$.We define
$$\varphi({\mathbb P}_1(\R), z_1,\ldots,z_n )
=(z_4,\ldots,z_n).$$ 
The chosen system of coordinates $(z_4,\ldots,z_n)$ in a cell or several cells of the space  ${\mathcal M}_{0,n}^{\mathbb R}$ we call the {\em parametrization \/} of these cells.
\end{de}

We fix the standard orientation of the space $\R^{n-3}$. This determines the orientation on the cells of the maximal dimension. In the chosen parametrization we can easily see the boundaries of the cells shown at Figure \ref{Klgr2}, which correspond to the gluing of the points marked by  $i$ and $j$, where $1\le i \le n$, $4\le j\le n$, i.e., the boundaries drawn at Figure \ref{Klgr1}. 

\begin{center}

\begin{figure}[h]
\centering
\begin{picture}(160,90) 
\setlength{\unitlength}{0.5pt}
\qbezier{(-30,140)(-40,105)(-50,70)}
\qbezier{(110,140)(120,105)(130,70)}
\qbezier{(-30,140)(5,160)(40,180)}
\qbezier{(40,180)(75,160)(110,140)}
\qbezier{(-20,20)(-35,45)(-50,70)}
\qbezier{(100,20)(115,45)(130,70)}
\put(0,5){$\bullet$}
\put(20,0){$\bullet$}
\put(40,-5){$\bullet$}
\put(60,0){$\bullet$}
\put(80,5){$\bullet$}
\put(75,163){$j$}
\put(-5,163){$i$}
\put(260,80){$1\le i \le n$,}
\put(260,60){$4\le j\le n$.}
\put(160,70){, \   where}
\end{picture}
\caption{A cell of the maximal dimension}
 \label{Klgr2}
\end{figure}
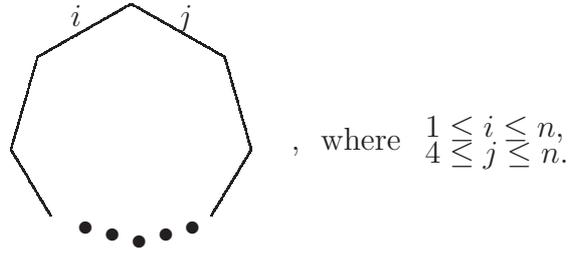
\end{center}


\begin{center}
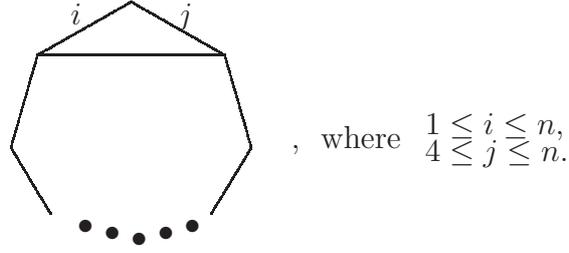
\begin{figure}[h]
\centering
\begin{picture}(160,90) 
\setlength{\unitlength}{0.5pt}
\qbezier{(-30,140)(-40,105)(-50,70)}
\qbezier{(110,140)(120,105)(130,70)}
\qbezier{(-30,140)(5,160)(40,180)}
\qbezier{(40,180)(75,160)(110,140)}
\qbezier{(-30,140)(70,140)(110,140)}
\qbezier{(-20,20)(-35,45)(-50,70)}
\qbezier{(100,20)(115,45)(130,70)}
\put(0,5){$\bullet$}
\put(20,0){$\bullet$}
\put(40,-5){$\bullet$}
\put(60,0){$\bullet$}
\put(80,5){$\bullet$}
\put(75,163){$j$}
\put(-5,163){$i$}
\put(260,80){$1\le i \le n$,}
\put(260,60){$4\le j\le n$.}
\put(160,70){, \ where}
\end{picture}
\caption{Cells $K_{ij|l_1\ldots l_{n-2}}$}
\label{Klgr1}
\end{figure}
\end{center}

\begin{lemma} \label{reShU} For all $n\ge 5$ and for all $i,j$, $1\le i\le n,\ 4\le j\le n$ the cells $K_{ij|l_1\ldots l_{n-2}}$, drawn at Figure  \ref{Klgr1},  are not in the class $\Th(\overline{{\mathcal M}_{0,n}^{\mathbb R}})$.
\end{lemma}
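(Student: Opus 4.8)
The plan is to use the reformulation of Theorem \ref{thmShU} given in the paragraph following it: a cell of codimension $1$ lies in $\Th(\oMn)$ if and only if the two maximal cells adjacent to it induce the \emph{same} orientation on their common face; if they induce opposite orientations, the two corresponding summands of $\f12\sum\partial\overline{k_j}$ cancel and the cell does not occur in $\Th(\oMn)$. For the cells $K_{ij|l_1\ldots l_{n-2}}$ of Figure \ref{Klgr1} I will therefore show that the two adjacent maximal cells always induce \emph{opposite} orientations.

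First I would record the global behaviour of the coordinate map $\varphi$ of Definition \ref{DefKoord}. Normalizing $z_1=0$, $z_2=1$, $z_3=\infty$ identifies the open stratum $\Mn$ diffeomorphically with the complement in $\R^{n-3}$ of the hyperplane arrangement $\{z_j=0\}$, $\{z_j=1\}$ for $4\le j\le n$ and $\{z_i=z_j\}$ for $4\le i<j\le n$; the $(n-1)!/2$ maximal cells correspond exactly to the chambers of this arrangement, in agreement with Example \ref{ex1} (for $n=4$ these are the three intervals $(-\infty,0)$, $(0,1)$, $(1,\infty)$, i.e.\ the three $1$-cells of Figure \ref{M_0,4}). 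The essential point is that the chosen orientation is given on every chamber at once by the single fixed volume form $\omega=dz_4\wedge\cdots\wedge dz_n$ on $\R^{n-3}$.

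The next step is to match each cell $K_{ij|l_1\ldots l_{n-2}}$ with a wall of this arrangement. Such a cell consists of $2$-component curves whose bubble carries exactly the two points $i$ and $j$, so it is the limit of smooth configurations in which $z_j$ runs into $z_i$: the wall is $\{z_j=0\}$ if $i=1$, $\{z_j=1\}$ if $i=2$, $\{z_j=\infty\}$ if $i=3$, and $\{z_i=z_j\}$ if $i,j\ge4$. In each case the two maximal cells abutting $K_{ij|l_1\ldots l_{n-2}}$ are precisely the two chambers on the two sides of this wall, glued in $\oMn$ along $K_{ij|l_1\ldots l_{n-2}}$ exactly as they are glued across the hyperplane in $\R^{n-3}$. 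The decisive observation is then that $\omega$ extends across the wall with locally constant sign: for the finite walls this is immediate, and for $i=3$ one passes to $w_j=1/z_j$, in which $z_j\to+\infty$ and $z_j\to-\infty$ become the two sides $w_j>0$ and $w_j<0$ of $\{w_j=0\}$ while $dz_j=-w_j^{-2}dw_j$ alters $\omega$ only by the everywhere-negative factor $-w_j^{-2}$. Since one orientation form is thus defined on a neighborhood of the wall covering both adjacent chambers, those chambers induce opposite boundary orientations on their common face, so in $\f12\sum\partial\overline{k_j}$ the coefficient of $K_{ij|l_1\ldots l_{n-2}}$ is $\f12(1-1)=0$, whence $K_{ij|l_1\ldots l_{n-2}}\notin\Th(\oMn)$.

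The hard part will be justifying that the smooth structure of the compactification $\oMn$ near the divisor $K_{ij|l_1\ldots l_{n-2}}$ genuinely agrees with this naive hyperplane-crossing picture in $\R^{n-3}$, i.e.\ that $z_j-z_i$ (respectively $1/z_j$) is an honest transverse coordinate at the wall and not, say, its square, so that the sign of $\omega$ really is locally constant across $K_{ij|l_1\ldots l_{n-2}}$. This is exactly the content of the assertion, stated just before the Lemma, that these boundaries are ``easily seen'' in the chosen parametrization; to make it precise I would exhibit, for a curve in $K_{ij|l_1\ldots l_{n-2}}$, a local trivialization of $\oMn$ in which the normal coordinate to the bubble is $z_j-z_i$ (or $1/z_j$) and the remaining coordinates are the moduli of the main component, and then check that $\varphi$ is a diffeomorphism onto a one-sided neighborhood of the wall on each side. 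The case $i=3$, where the wall sits at infinity, together with the bookkeeping guaranteeing that each $K_{ij|l_1\ldots l_{n-2}}$ borders exactly two maximal cells (so that the two cancelling summands are the only contributions), are the two points I expect to require the most care.
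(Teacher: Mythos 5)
Your proposal is correct and takes essentially the same route as the paper: the paper's proof consists precisely of the assertion that the two maximal cells adjacent to $K_{ij|l_1\ldots l_{n-2}}$ induce opposite orientations on it, because both are oriented at once by the single global chart $(z_4,\ldots,z_n)$ of Definition \ref{DefKoord} in which these boundaries are visible, so their two contributions to the sum (\ref{hru}) cancel. Your write-up merely fills in what the paper leaves implicit --- the hyperplane-wall identification, the inversion $w_j=1/z_j$ handling the wall at infinity for $i=3$, and the transversality of $z_j-z_i$ to the boundary divisor --- but the underlying cancellation argument is identical.
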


\begin{proof}
For all $i,j$,  $1\le i\le n,\ 4\le j\le n$, the cells of the maximal dimension having the cell at  Figure  \ref{Klgr1} as their common bound induce the opposite orientations on this cell. Hence, by Theorem \ref{thmShU} the common boundary $K_{ij|l_1\ldots l_{n-2}}$ is included to the sum  (\ref{hru})  twice with the opposite signs, hence, it is not in the class $\Th(\overline{{\mathcal M}_{0,n}^{\mathbb R}})$.
\end{proof}

\subsection{Computation of $\Thh(\overline{{\mathcal M}_{0,5}^\R})$}

By Lemma \ref{reShU} it remains to consider the cells of codimension 1 of the form

\begin{center}
\begin{picture}(100,140)
\put(0,0){\line(1,0){80}}
\qbezier{(-30,70)(-15,35)(0,0)}
\qbezier{(110,70)(95,35)(80,0)}
\qbezier{(-30,70)(5,90)(40,110)}
\qbezier{(40,110)(75,90)(110,70)}
\qbezier{(-30,70)(25,70)(110,70)}
\put(75,90){$j$}
\put(-5,90){$i$}
\end{picture}
\end{center}

where $1 \le i,j \le 3$.

We start with $i=1$, $j=2$.

\begin{lemma} \label{K12345} The boundary cell labeled by the pentagon

\begin{center}
\begin{picture}(100,140)
\put(0,0){\line(1,0){80}}
\qbezier{(-30,70)(-15,35)(0,0)}
\qbezier{(110,70)(95,35)(80,0)}
\qbezier{(-30,70)(5,90)(40,110)}
\qbezier{(40,110)(75,90)(110,70)}
\qbezier{(-30,70)(25,70)(110,70)}
\put(40,-12){$4$}
\put(100,30){$3$}
\put(75,90){$2$}
\put(-5,90){$1$}
\put(-25,30){$5$}
\put(-15,120){$K_{12|345}$}
\end{picture}
\end{center}
is in the class  $\Thh(\overline{{\mathcal M}_{0,5}^{\mathbb R}})$.
\end{lemma}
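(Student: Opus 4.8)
The plan is to compare the orientations that the two maximal cells adjacent to $K_{12|345}$ induce on it, exactly along the scheme described after Theorem~\ref{thmShU}. The parametrization $\varphi=(z_4,z_5)$ of Definition~\ref{DefKoord} fixes $z_1=0$, $z_2=1$, $z_3=\infty$, so it cannot be prolonged to $K_{12|345}$: at this boundary the marked points $1$ and $2$ collide (they sit on the bubble component), yet in $\varphi$ they are pinned at $0$ and $1$, so the degeneration is pushed off to infinity in the $(z_4,z_5)$-plane. This is precisely the case excluded from Lemma~\ref{reShU}, where the colliding pair had its second index $\ge 4$ and the collision was visible directly in $\varphi$. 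To see $K_{12|345}$ I would instead introduce the parametrization $\psi$ that fixes the three points lying on the main component, say $z_3=\infty$, $z_4=0$, $z_5=1$, and records the positions $(w_1,w_2)$ of the points $1$ and $2$. In $\psi$ the cell $K_{12|345}$ becomes the diagonal $\{w_1=w_2\}$ (the common value being the position of the node on the main component), and the two maximal cells sharing $K_{12|345}$ as a face are the two sides $\{w_1<w_2\}$ and $\{w_1>w_2\}$.

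The step I expect to be the main obstacle is to justify that $\psi$ really extends to a \emph{smooth} chart across $K_{12|345}$, realizing the cell as the smooth hypersurface $\{w_1=w_2\}$, i.e. that the real Deligne--Mumford compactification needs no blow-up in codimension one. This is the phenomenon already present for $\overline{{\mathcal M}_{0,4}^{\mathbb R}}\cong S^1$, where the collision of two real points is an ordinary interior point of the circle and the cross-ratio is a smooth coordinate through it. Concretely, writing $n$ for the node position and $t$ for a real smoothing parameter, one has $w_i=n+tc_i+O(t^2)$ with fixed distinct constants $c_1,c_2$ (the bubble carrying $1,2$ and the node is rigid), so $(n,t)\mapsto(w_1,w_2)$ is a local diffeomorphism near $t=0$ and $\{t=0\}$ maps onto $\{w_1=w_2\}$. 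Once this is secured, I would compute the transition to $\varphi$ by cross-ratios: evaluating the M\"obius map $g$ sending $(p_1,p_2,p_3)\mapsto(0,1,\infty)$ in the $\psi$-frame $(p_1,p_2,p_3,p_4,p_5)=(w_1,w_2,\infty,0,1)$ gives
\begin{equation}
z_4=g(p_4)=\frac{w_1}{w_1-w_2},\qquad z_5=g(p_5)=\frac{w_1-1}{w_1-w_2},
\end{equation}
and a short computation yields the Jacobian
\begin{equation}
\det\frac{\partial(z_4,z_5)}{\partial(w_1,w_2)}=-\frac{1}{(w_1-w_2)^3}.
\end{equation}

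The crux is the \emph{odd} order of this Jacobian along $\{w_1=w_2\}$: its sign is positive on $\{w_1<w_2\}$ and negative on $\{w_1>w_2\}$. Hence the fixed $\varphi$-orientation $dz_4\wedge dz_5$, transported into the common smooth chart, equals $+\,dw_1\wedge dw_2$ on one of the two adjacent maximal cells and $-\,dw_1\wedge dw_2$ on the other. The global orientation therefore fails to extend across $K_{12|345}$, which is exactly the condition that the two maximal cells induce the \emph{same} orientation on their common boundary. By Theorem~\ref{thmShU} the two contributions of $K_{12|345}$ to $\frac12\sum\partial\overline{k_j}$ then add rather than cancel, so $K_{12|345}$ occurs with coefficient $1$ modulo $2$ and lies in $\Thh(\oMf)$; this is the behaviour opposite to that of the cells of Lemma~\ref{reShU}, whose two contributions cancel. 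The only genuinely delicate point is thus the smoothness of the boundary-adapted chart (and the bookkeeping of the induced boundary orientations); the sign of the Jacobian itself is routine once the transition formulas are in place.
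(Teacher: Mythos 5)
Your proposal is correct and follows essentially the same route as the paper: the paper's proof introduces exactly your chart fixing the points $3,4,5$ (its coordinates $(y_1,y_2)$ are your $(w_1,w_2)$), derives the same M\"obius transition, obtains the Jacobian $\frac{z_5-z_4}{(z_4-z_5)^4}$ (the inverse of your $-\frac{1}{(w_1-w_2)^3}$, with the same sign behaviour), and concludes from the opposite Jacobian signs on the two adjacent maximal cells that in the $(z_4,z_5)$-orientation they induce the same orientation on $K_{12|345}$, so its two contributions in Theorem \ref{thmShU} add and give coefficient $1$ modulo $2$. The only differences are cosmetic: you compute the Jacobian in the opposite direction and spell out the smoothness of the boundary-adapted chart, a point the paper takes for granted.
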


\begin{proof}
1. Let us consider the coordinates on ${{\mathcal M}_{0,5}^{\mathbb R}}$ which can be prolonged to this boundary. To do this, similarly with Definition \ref{DefKoord} we construct an appropriate coordinate map $\varpi_1: {{\mathcal M}_{0,5}^{\mathbb R}} \to {\mathbb R}^2$. Let $({\mathbb P}_1(\R),y_1,\ldots, y_5)\in {{\mathcal M}_{0,5}^{\mathbb R}}$ be a point of the moduli space ${\mathcal M}_{0,5}^\R$. Let us consider a parametrization of the curve ${\mathbb P}_1(\R) $ such that $y_3 =\infty, y_4=0, y_5=1$. We set $\varphi_1({\mathbb P}_1(\R),y_1,\ldots,y_5)=(y_1,y_2)$. 

We now find the transition function from the coordinates $(z_4,z_5)$ to the coordinates $(y_1,y_2)$. To do this, we write $z$- and $y$-coordinates on  ${\mathbb P}_1(\R) $ for each of the 5 marked points and find the rational-fractional transformation, which maps the coordinates to each other. The condition of the coordinate changing has the form
$$
\begin{array}{cccccc} i& 1&2&3&4&5 \\ z-\mbox{coordinates } &0&1&\infty& z_4&z_5 \\y-\mbox{coordinates } &y_1&y_2&\infty&0&1 \end{array}$$

We are looking for the linear-fractional transformation of the form  
$f(t)=\frac{at+b}{ct+d}$. 

We have: $f(z_3)=f(\infty)=\infty$, which implies $c=0,\ d=1$. Further, substituting the known values we get
$$f(z_4)=0=\frac{az_4+b}{cz_4+d}=az_4+b,$$
$$f(z_5)=1= \frac{az_5+b}{cz_5+d}=az_5+b.$$
Then $a=\frac{1}{z_5-z_4},\ b=\frac{z_4}{z_4-z_5}$. We obtain, $$f(t)=\frac{z_4-t}{z_4-z_5}.$$
Therefore, 
$$y_1=f(0)=\frac{z_4}{z_4-z_5} \mbox{ and } y_2=f(1)=\frac{z_4-1}{z_4-z_5}.$$

2. Let us consider two cells of maximal dimension, such that the cell $K_{12|345}$ is their common boundary. Orientations of these maximal dimension cells provided by the standard orientation of the plane  $(y_1,y_2)$ are opposite and thus induce opposite orientations on  $K_{12|345}$. We have to understand if the orientations of these two cells provided by the orientation of the plane $(z_1,z_2)$ are the same or opposite. Thus we have to check if the orientation of the maximal dimension cells provided by the orientation of  $(y_1,y_2)$ coincides with the orientation provided by the orientation of  $(z_4,z_5)$.  

We compute the Jacobian of coordinate change from $(z_4,z_5)$ to $(y_1,y_2)$:
$$J=\det \left( \begin{array}{cc} \frac{\partial y_1}{\partial z_4} &  \frac{\partial y_1}{\partial z_5} \\  \frac{\partial y_2}{\partial z_4}  &  \frac{\partial y_2}{\partial z_5} \end{array}\right).$$
Direct computations which we omit to shorter the text show that
$$J= \det \left( \begin{array}{cc} \frac{-z_5}{(z_4-z_5)^2} &  \frac{z_4}{(z_4-z_5)^2} \\  \frac{1-z_5}{(z_4-z_5)^2} &  \frac{1+z_4}{(z_4-z_5)^2}\end{array}\right) =  \frac{1}{(z_4-z_5)^4}(z_5-z_4).$$

3. The cell $K_{12|345}$ is the common boundary of the following two cells:

\begin{center}
\setlength{\unitlength}{0.5pt}
\begin{picture}(360,140)
\put(0,0){\line(1,0){80}}
\put(280,0){\line(1,0){80}}
\qbezier{(-30,70)(-15,35)(0,0)}
\qbezier{(110,70)(95,35)(80,0)}
\qbezier{(-30,70)(5,90)(40,110)}
\qbezier{(40,110)(75,90)(110,70)}
\qbezier{(250,70)(265,35)(280,0)}
\qbezier{(390,70)(375,35)(360,0)}
\qbezier{(250,70)(285,90)(320,110)}
\qbezier{(320,110)(355,90)(390,70)}
\put(40,-24){$4$}
\put(103,27){$3$}
\put(75,90){$2$}
\put(-5,90){$1$}
\put(-32,27){$5$}
\put(320,-24){$4$}
\put(383,27){$3$}
\put(355,90){$1$}
\put(275,90){$2$}
\put(248,27){$5$}
\end{picture}
\end{center}

These cells correspond to the following orders of the marked points:

\begin{center}
\begin{picture}(360,10)
\put(-5,10){\line(1,0){85}}
\put(275,10){\line(1,0){85}}
\put(0,6){$\bullet$}
\put(20,6){$\bullet$}
\put(40,6){$\bullet$}
\put(60,6){$\bullet$}
\put(80,6){$\bullet$}
\put(0,-4){$z_4$}
\put(20,-4){$z_5$}
\put(40,-4){$0$}
\put(60,-4){$1$}
\put(80,-4){$\infty$}
\put(280,6){$\bullet$}
\put(300,6){$\bullet$}
\put(320,6){$\bullet$}
\put(340,6){$\bullet$}
\put(360,6){$\bullet$}
\put(280,-4){$0$}
\put(300,-4){$1$}
\put(320,-4){$z_5$}
\put(340,-4){$z_4$}
\put(360,-4){$\infty$}
\end{picture}
\end{center}

correspondingly. 

4. So, we have that  $z_4<z_5<0$ for the cell marked by the left pentagon, and $1<z_5<z_4<\infty$ for the cell marked by the right pentagon. Then for the left cell we get $J>0$, and for the right one $J<0$. Since in the parametrization $(y_1,y_2)$ these cells have opposite orientations, and the Jacobians have the opposite signs, we get that in the parametrization $(z_4,z_5)$ these cells have the same orientation. Hence, the cell $K_{12|345}$ is included twice to the expression for  $\Thh(\overline{{\mathcal M}_{0,5}^\R})$. Multiplying the sum of boundary cells by $\frac12$ we get that the cell $K_{12|345}$ goes with the coefficient 1, which remains fixed modulus 2. Therefore, the class $\Thh(\overline{{\mathcal M}_{0,5}^\R})$ contains the cell labeled by the pentagon $K_{12|345}$.
\end{proof}

\begin{lemma} \label{K12435} Boundary cells labeled by the pentagons

\begin{center}
\setlength{\unitlength}{0.5pt}
\begin{picture}(360,140)
\put(0,0){\line(1,0){80}}
\put(280,0){\line(1,0){80}}
\qbezier{(-30,70)(-15,35)(0,0)}
\qbezier{(110,70)(95,35)(80,0)}
\qbezier{(-30,70)(5,90)(40,110)}
\qbezier{(40,110)(75,90)(110,70)}
\qbezier{(250,70)(265,35)(280,0)}
\qbezier{(390,70)(375,35)(360,0)}
\qbezier{(250,70)(285,90)(320,110)}
\qbezier{(320,110)(355,90)(390,70)}
\qbezier{(-30,70)(25,70)(110,70)}
\qbezier{(250,70)(305,70)(390,70)}
\put(40,-24){$3$}
\put(103,27){$4$}
\put(75,90){$2$}
\put(-5,90){$1$}
\put(-32,27){$5$}
\put(320,-24){$5$}
\put(383,27){$3$}
\put(355,90){$2$}
\put(275,90){$1$}
\put(248,27){$4$}
\put(-15,130){$K_{12|435}$}
\put(265,130){$K_{12|354}$}
\end{picture}
\end{center}
are in the class $\Thh(\overline{{\mathcal M}_{0,5}^\R})$, which is Poincar\'e dual to the first  Stiefel-Whitney class of $\overline{{\mathcal M}_{0,5}^\R}$.
\end{lemma}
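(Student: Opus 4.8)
The plan is to run, for each of the two cells $K_{12|435}$ and $K_{12|354}$, the same three–step argument that proves Lemma~\ref{K12345}, reusing verbatim the auxiliary chart and the transition Jacobian computed there. Since in both cases the bubble carries exactly the points $1$ and $2$, the coordinate map with $y_3=\infty$, $y_4=0$, $y_5=1$ recording $(y_1,y_2)$ again extends across the boundary, and the change to the standard chart $(z_4,z_5)$ is governed by the very same Jacobian $J=\frac{z_5-z_4}{(z_4-z_5)^4}=-\frac{1}{(z_4-z_5)^3}$, whose sign is the sign of $z_5-z_4$. So no new differentiation is needed; what changes is only the bookkeeping of which two top–dimensional cells meet along each boundary cell.

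First I would locate the two new boundary cells inside the $(y_1,y_2)$–chart. Along the locus $\{y_1=y_2=t\}$ the points $1$ and $2$ collide and bubble off, and the position $t$ of the resulting node relative to $3=\infty$, $4=0$, $5=1$ fixes the cyclic order on the large component, hence the cell. Checking the three arcs of ${\mathbb P}_1(\R)\setminus\{0,1,\infty\}$ shows that $t\in(1,\infty)$ gives $K_{12|345}$, $t\in(0,1)$ gives $K_{12|435}$, and $t\in(-\infty,0)$ gives $K_{12|354}$. For each of the two new cells I then read off the two adjacent maximal cells as the local half–spaces $\{y_1<y_2\}$ and $\{y_1>y_2\}$, translate each into an ordering of $0,1,\infty,z_4,z_5$ on the line, and record the resulting sign of $z_5-z_4$. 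In each case the two adjacent cells turn out to carry opposite signs of $J$; since in the $(y_1,y_2)$–chart they sit on opposite sides of $\{y_1=y_2\}$ and therefore induce opposite orientations on the boundary, the opposite Jacobian signs make the two standard $(z_4,z_5)$–orientations \emph{agree} on the common boundary. By Theorem~\ref{thmShU} the cell then enters the sum~(\ref{hru}) twice with the same sign, survives division by $2$ modulo $2$, and lies in $\Thh(\overline{{\mathcal M}_{0,5}^\R})$.

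I would streamline all of this by one symmetry remark that handles both cells at once (and reproves Lemma~\ref{K12345}): the two maximal cells meeting along any boundary cell whose bubble carries $1$ and $2$ differ precisely by interchanging the marked points $1$ and $2$. This involution fixes the boundary cell (on the bubble the points $1,2,$ node give only three special points, so its modulus is trivial), acts on the $(y_1,y_2)$–chart as $(y_1,y_2)\mapsto(y_2,y_1)$, and on the standard chart by $w\mapsto 1-w$, i.e. $(z_4,z_5)\mapsto(1-z_4,1-z_5)$. Under the latter $z_4-z_5\mapsto-(z_4-z_5)$, so $J=-1/(z_4-z_5)^3$ changes sign — this is the uniform reason the two adjacent cells always carry opposite Jacobian signs. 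The only genuine obstacle is the combinatorial bookkeeping of the first step: correctly matching each arc of the collision locus to the labelled pentagon, and keeping track of the dihedral (reflection) ambiguity when passing between the cyclic order of the five points and the numerical ordering on the line. Once the two adjacencies are pinned down, the orientation comparison is immediate from the sign of $J$, and the conclusion for both $K_{12|435}$ and $K_{12|354}$ follows exactly as for $K_{12|345}$.
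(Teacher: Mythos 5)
Your proposal is correct and follows essentially the same route as the paper's proof: reuse the $(y_1,y_2)$-chart and the Jacobian $J=\frac{z_5-z_4}{(z_4-z_5)^4}$ from Lemma~\ref{K12345}, identify the two top-dimensional cells adjacent to each of $K_{12|435}$ and $K_{12|354}$, check that they carry opposite signs of $J$, and conclude via Theorem~\ref{thmShU} that their standard $(z_4,z_5)$-orientations induce the \emph{same} orientation on the common boundary, so each cell enters the sum (\ref{hru}) with coefficient $1$ and lies in $\Thh(\overline{{\mathcal M}_{0,5}^\R})$. Your closing involution remark --- that the two adjacent cells are exchanged by swapping the labels $1$ and $2$, which acts on the standard chart by $(z_4,z_5)\mapsto(1-z_4,1-z_5)$ and hence flips the sign of $J$ --- is a nice uniform replacement for the paper's cell-by-cell listing of point orderings; it even sidesteps a copy-paste slip in the paper, where the displayed orderings for the cells adjacent to $K_{12|435}$ and to $K_{12|354}$ are identical, though this does not affect the paper's sign conclusions.
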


\begin{proof}
1. Our proof is similar to the proof of Lemma \ref{K12345} and we use the coordinates $(y_1,y_2)$ on   ${{\mathcal M}_{0,5}^\R}$ introduced in that proof. The cell labeled by $K_{12|435}$ is the common bound of the cells labeled by the pentagons

\begin{center}
\setlength{\unitlength}{0.5pt}
\begin{picture}(360,140)
\put(0,0){\line(1,0){80}}
\put(280,0){\line(1,0){80}}
\qbezier{(-30,70)(-15,35)(0,0)}
\qbezier{(110,70)(95,35)(80,0)}
\qbezier{(-30,70)(5,90)(40,110)}
\qbezier{(40,110)(75,90)(110,70)}
\qbezier{(250,70)(265,35)(280,0)}
\qbezier{(390,70)(375,35)(360,0)}
\qbezier{(250,70)(285,90)(320,110)}
\qbezier{(320,110)(355,90)(390,70)}
\put(40,-24){$3$}
\put(103,27){$4$}
\put(75,90){$2$}
\put(-5,90){$1$}
\put(-32,27){$5$}
\put(320,-24){$3$}
\put(383,27){$4$}
\put(355,90){$1$}
\put(275,90){$2$}
\put(248,27){$5$}
\end{picture}
\end{center}

The value of the Jacobian for the change of coordinates from $(z_4,z_5)$ to $(y_1,y_2)$ by Lemma  \ref{K12345} is equal to   $J=\frac{z_5-z_4}{(z_4-z_5)^4}$. For all elements of these two cells we have the following order of points on curves:

\begin{center}
\begin{picture}(360,10)
\put(-5,0){\line(1,0){85}}
\put(275,0){\line(1,0){85}}
\put(0,-4){$\bullet$}
\put(20,-4){$\bullet$}
\put(40,-4){$\bullet$}
\put(60,-4){$\bullet$}
\put(80,-4){$\bullet$}
\put(0,-14){$z_5$}
\put(20,-14){$z_4$}
\put(40,-14){$0$}
\put(60,-14){$1$}
\put(80,-14){$\infty$}
\put(280,-4){$\bullet$}
\put(300,-4){$\bullet$}
\put(320,-4){$\bullet$}
\put(340,-4){$\bullet$}
\put(360,-4){$\bullet$}
\put(280,-14){$0$}
\put(300,-14){$1$}
\put(320,-14){$z_4$}
\put(340,-14){$z_5$}
\put(360,-14){$\infty$}
\end{picture}
\end{center}
  
So, for the cell marked by the left pentagon we get $J<0$, and for the right one: $J>0$. Since in the coordinates $(y_1,y_2)$ these cells have opposite orientations, in the coordinates  $(z_4,z_5)$ they have the same orientation. Hence their common bound, i.e., the cell labeled by $K_{12|435}$, is in the class $\Thh(\overline{{\mathcal M}_{0,5}^\R})$. 

2. Similarly, for the cell labeled by  $K_{12|354}$ the Jacobian value is the same, however the points on the curves in the corresponding cells of the maximal dimension are ordered as follows:

\begin{center}
\begin{picture}(360,10)
\put(-5,0){\line(1,0){85}}
\put(275,0){\line(1,0){85}}
\put(0,-4){$\bullet$}
\put(20,-4){$\bullet$}
\put(40,-4){$\bullet$}
\put(60,-4){$\bullet$}
\put(80,-4){$\bullet$}
\put(0,-14){$z_5$}
\put(20,-14){$z_4$}
\put(40,-14){$0$}
\put(60,-14){$1$}
\put(80,-14){$\infty$}
\put(280,-4){$\bullet$}
\put(300,-4){$\bullet$}
\put(320,-4){$\bullet$}
\put(340,-4){$\bullet$}
\put(360,-4){$\bullet$}
\put(280,-14){$0$}
\put(300,-14){$1$}
\put(320,-14){$z_4$}
\put(340,-14){$z_5$}
\put(360,-14){$\infty$}
\end{picture}
\end{center}
  
Thus, for one of them we get $J<0$, and for another one $J>0$.  Since in the coordinates $(y_1,y_2)$ these cells have opposite orientations, we get that in the coordinates  $(z_4,z_5)$ they have the same orientation. Hence their common bound, i.e., the cell labeled by $K_{12|354}$, is in the class $\Thh(\overline{{\mathcal M}_{0,5}^\R})$. 
\end{proof}

\begin{lemma} \label{K13} The boundary cells labeled by the pentagons 

\begin{center}
\setlength{\unitlength}{0.5pt}
\begin{picture}(640,140)
\put(0,0){\line(1,0){80}}
\put(280,0){\line(1,0){80}}
\qbezier{(-30,70)(-15,35)(0,0)}
\qbezier{(110,70)(95,35)(80,0)}
\qbezier{(-30,70)(5,90)(40,110)}
\qbezier{(40,110)(75,90)(110,70)}
\qbezier{(250,70)(265,35)(280,0)}
\qbezier{(390,70)(375,35)(360,0)}
\qbezier{(250,70)(285,90)(320,110)}
\qbezier{(320,110)(355,90)(390,70)}
\qbezier{(-30,70)(25,70)(110,70)}
\qbezier{(250,70)(305,70)(390,70)}
\put(40,-24){$4$}
\put(103,27){$5$}
\put(75,90){$3$}
\put(-5,90){$1$}
\put(-32,27){$2$}
\put(320,-24){$5$}
\put(383,27){$4$}
\put(355,90){$3$}
\put(275,90){$1$}
\put(248,27){$2$}
\put(560,0){\line(1,0){80}}
\qbezier{(530,70)(545,35)(560,0)}
\qbezier{(670,70)(655,35)(640,0)}
\qbezier{(530,70)(565,90)(600,110)}
\qbezier{(600,110)(635,90)(670,70)}
\qbezier{(-30,70)(25,70)(110,70)}
\qbezier{(530,70)(585,70)(670,70)}
\put(600,-24){$2$}
\put(663,27){$5$}
\put(635,90){$3$}
\put(555,90){$1$}
\put(528,27){$4$}
\put(-15,130){$K_{13|542}$}
\put(265,130){$K_{13|452}$}
\put(545,130){$K_{13|524}$}
\end{picture}
\end{center}
are in the class $\Thh(\overline{{\mathcal M}_{0,5}^\R})$.
\end{lemma}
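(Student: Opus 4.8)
The proof runs along the same lines as Lemmas \ref{K12345} and \ref{K12435}, so I would reuse that machinery with a chart adapted to the new degeneration. Since all three cells $K_{13|542}$, $K_{13|452}$, $K_{13|524}$ are boundaries on which the marked points $1$ and $3$ collide into the node, the coordinates $(z_4,z_5)$ of Definition \ref{DefKoord} (which keep $z_3=\infty$ fixed) do not prolong to them. The first step is therefore to introduce a single coordinate map $\varphi_2\colon {\mathcal M}_{0,5}^\R\to\R^2$ that does prolong across all three boundaries: I would fix a parametrization of ${\mathbb P}_1(\R)$ by normalizing three of the remaining points, say $y_2=\infty$, $y_4=0$, $y_5=1$, and set $\varphi_2(\,\cdot\,)=(y_1,y_3)$. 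In this chart each of the three boundaries is the locus $y_1=y_3$, so $(y_1,y_3)$ is a genuine chart in a neighbourhood of each cell and the two maximal cells meeting along it receive opposite orientations from the standard orientation of the $(y_1,y_3)$-plane.

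Next I would compute the transition function from $(z_4,z_5)$ to $(y_1,y_3)$ exactly as in Lemma \ref{K12345}: writing down the $z$- and $y$-coordinates of all five points and solving for the linear-fractional map $g$ with $g(1)=\infty$, $g(z_4)=0$, $g(z_5)=1$ gives $g(t)=\frac{(t-z_4)(z_5-1)}{(z_5-z_4)(t-1)}$, whence $y_1=g(0)$ and $y_3=g(\infty)$ are explicit rational functions of $(z_4,z_5)$. I would then form the $2\times 2$ Jacobian $J=\det\partial(y_1,y_3)/\partial(z_4,z_5)$; as in the previous lemmas its sign is controlled by a single simple factor, so only $\operatorname{sign}J$ as a function of the region of the $(z_4,z_5)$-plane is needed.

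The bulk of the argument is then purely combinatorial and is carried out once for each of the three cells. For $K_{13|542}$, $K_{13|452}$, $K_{13|524}$ I would identify the two maximal cells having it as a common boundary — these are the two pentagons without a diagonal obtained by opening the diagonal in the two possible ways, equivalently the two cyclic orders of the three special points on the bubble carrying $1$ and $3$ — and read off from each the induced linear order of the five points on ${\mathbb P}_1(\R)$. Using $z_1=0$, $z_2=1$, $z_3=\infty$ this linear order fixes the chain of inequalities among $z_4,z_5$, hence the region and the sign of $J$. Exactly as in Lemmas \ref{K12345} and \ref{K12435}, I expect to find that the two maximal cells flanking each $K_{13|\sigma}$ lie in regions on which $J$ has opposite signs; combined with the fact that they already carry opposite orientations in the $(y_1,y_3)$-chart, this forces them to carry the same orientation in the $(z_4,z_5)$-parametrization. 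By Theorem \ref{thmShU} the cell then enters $\frac12\sum\partial\overline{k_j}$ with coefficient $2$, i.e. with coefficient $1$ modulo $2$, so it lies in $\Thh(\oMf)$.

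The one delicate point, and the place where an error would most easily creep in, is the combinatorial bookkeeping: correctly matching each labelled pentagon $K_{13|\sigma}$ to its two flanking maximal cells up to the dihedral and twist identifications of Construction \ref{l:oMn_cell_dec1}, and then translating their point-orders into the correct inequalities on $z_4,z_5$. Once those regions are pinned down the sign computation is immediate. I would double-check this bookkeeping against the adjacency picture of Figure \ref{fig5_3}, and verify consistency by noting that the three cells are related by relabelling of the points $\{2,4,5\}$, so a sign flip in one of them would have to appear in all three.
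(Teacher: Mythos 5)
Your proposal is correct and takes essentially the same approach as the paper: the paper's proof likewise introduces a chart fixing points $2,4,5$ (there normalized as $x_2=0$, $x_4=1$, $x_5=\infty$) with coordinates $(x_1,x_3)$, computes the Jacobian of the transition from $(z_4,z_5)$, and compares its sign on the two maximal cells flanking each of the three boundary cells. The sign bookkeeping you defer does work out as you expect: with your normalization the Jacobian is $J=\frac{(z_5-1)(1-z_4)}{(z_5-z_4)^3}$, and it has opposite signs on the two flanking cells in all three cases, so the conclusion follows exactly as in Lemmas \ref{K12345} and \ref{K12435}.
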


\begin{proof}
1. For the investigation of these  3 cells we consider new coordinates on ${\mathcal M}_{0,5}^{\mathbb R}$. As in Definition \ref{DefKoord} we construct the coordinatization map 
 $\varphi_2: {\mathcal M}_{0,5}^{{\mathbb R}} \to {\mathbb R}^2$. Let $({\mathbb P}_1(\R),x_1,\ldots, x_5)\in {{\mathcal M}_{0,5}^{\mathbb R}}$ be a point of the moduli space ${\mathcal M}_{0,5}^{{\mathbb R}}$. We choose the parametrization $x$ of ${\mathbb P}_1(\R) $ such that $x_2 =0, x_4=1, x_5=\infty$ and set  $\varphi_2({\mathbb P}_1(\R),x_1,\ldots,x_5)=(x_1,x_3)$. 

Let us find the expression for the coordinates  $(x_1,x_3)$ via the coordinates  $(z_4,z_5)$. To do this, we set to each of the marked points  $i$ its $x$-coordinates and $z$-coordinates on ${\mathbb P}_1(\R) $. Then we will find the rational-fractional function which maps $z$-coordinates to  $x$-coordinates. The condition is
$$
\begin{array}{cccccc} i& 1&2&3&4&5 \\ x-\mbox{coordinate } & x_1& 0 & x_3 & 1 &\infty \\ z-\mbox{coordinate} &0&1&\infty& z_4&z_5 \end{array}.$$ 

We find rational-fractional transformation $f(t)$ by the conditions $f(1)=0$, $f(z_4)=1$, $f(z_5)=\infty$.  Then
$$f(t)=\frac{z_4-z_5}{z_4-1}\cdot \frac{t-1}{t-z_5}.$$
Hence in this case the Jacobian of the coordinate change is equal to 
$$J=\det\left(\begin{array}{cc} \frac{z_5-1}{z_5(z_4-1)^2} & \frac{-z_4}{(z_4-1)z_5^2} \\ \frac{z_5-1}{(z_4-1)^2} & \frac{1}{1-z_4} \end{array} \right) = \frac{(1-z_5)(z_5-z_4)}{(z_4-1)^3z_5^2}.$$

2. The cell labeled by $K_{13|542}$ is the common boundary of the cells labeled by:

\begin{center}
\setlength{\unitlength}{0.5pt}
\begin{picture}(360,140)
\put(0,0){\line(1,0){80}}
\put(280,0){\line(1,0){80}}
\qbezier{(-30,70)(-15,35)(0,0)}
\qbezier{(110,70)(95,35)(80,0)}
\qbezier{(-30,70)(5,90)(40,110)}
\qbezier{(40,110)(75,90)(110,70)}
\qbezier{(250,70)(265,35)(280,0)}
\qbezier{(390,70)(375,35)(360,0)}
\qbezier{(250,70)(285,90)(320,110)}
\qbezier{(320,110)(355,90)(390,70)}
\put(40,-24){$4$}
\put(103,27){$5$}
\put(75,90){$3$}
\put(-5,90){$1$}
\put(-32,27){$2$}
\put(320,-24){$4$}
\put(383,27){$5$}
\put(355,90){$1$}
\put(275,90){$3$}
\put(248,27){$2$}
\end{picture}
\end{center}

The following variants of marked point order correspond to these cells:

\begin{center}
\begin{picture}(360,10)
\put(-5,0){\line(1,0){85}}
\put(275,0){\line(1,0){85}}
\put(0,-4){$\bullet$}
\put(20,-4){$\bullet$}
\put(40,-4){$\bullet$}
\put(60,-4){$\bullet$}
\put(80,-4){$\bullet$}
\put(0,-14){$0$}
\put(20,-14){$1$}
\put(40,-14){$z_4$}
\put(60,-14){$z_5$}
\put(80,-14){$\infty$}
\put(280,-4){$\bullet$}
\put(300,-4){$\bullet$}
\put(320,-4){$\bullet$}
\put(340,-4){$\bullet$}
\put(360,-4){$\bullet$}
\put(280,-14){$0$}
\put(300,-14){$z_5$}
\put(320,-14){$z_4$}
\put(340,-14){$1$}
\put(360,-14){$\infty$}
\end{picture}
\end{center}

\bigskip

Then $0<1<z_4<z_5<\infty$, hence, $J<0$ for the cell labeled by the left pentagon. Also $0<z_5<z_4<1<\infty$  and $J>0$ for the cell labeled by the right pentagon. As in the previous lemmas we get that the cell labeled by $K_{13|542}$ is in the class $\Thh(\overline{{\mathcal M}_{0,5}^{\R}})$.

4. Similarly, the cell labeled by $K_{13|452}$ is the common boundary of the cells labeled by

\begin{center}
\setlength{\unitlength}{0.5pt}
\begin{picture}(360,140)
\put(0,0){\line(1,0){80}}
\put(280,0){\line(1,0){80}}
\qbezier{(-30,70)(-15,35)(0,0)}
\qbezier{(110,70)(95,35)(80,0)}
\qbezier{(-30,70)(5,90)(40,110)}
\qbezier{(40,110)(75,90)(110,70)}
\qbezier{(250,70)(265,35)(280,0)}
\qbezier{(390,70)(375,35)(360,0)}
\qbezier{(250,70)(285,90)(320,110)}
\qbezier{(320,110)(355,90)(390,70)}
\put(40,-24){$5$}
\put(103,27){$4$}
\put(75,90){$3$}
\put(-5,90){$1$}
\put(-32,27){$2$}
\put(320,-24){$5$}
\put(383,27){$4$}
\put(355,90){$1$}
\put(275,90){$3$}
\put(248,27){$2$}
\end{picture}
\end{center}

i.e., the cells containing curves with the following order of the marked points

\begin{center}
\begin{picture}(360,10)
\put(-5,0){\line(1,0){85}}
\put(275,0){\line(1,0){85}}
\put(0,-4){$\bullet$}
\put(20,-4){$\bullet$}
\put(40,-4){$\bullet$}
\put(60,-4){$\bullet$}
\put(80,-4){$\bullet$}
\put(0,-14){$0$}
\put(20,-14){$1$}
\put(40,-14){$z_5$}
\put(60,-14){$z_4$}
\put(80,-14){$\infty$}
\put(280,-4){$\bullet$}
\put(300,-4){$\bullet$}
\put(320,-4){$\bullet$}
\put(340,-4){$\bullet$}
\put(360,-4){$\bullet$}
\put(280,-14){$0$}
\put(300,-14){$z_4$}
\put(320,-14){$z_5$}
\put(340,-14){$1$}
\put(360,-14){$\infty$}
\end{picture}
\end{center}
correspondingly.

Then  $J>0$ for the cell labeled by the left pentagon and  $J<0$ for the cell labeled by the right pentagon. Hence, the cell marked by  $K_{13|452}$ is in the class $\Thh(\overline{{\mathcal M}_{0,5}^{\R}})$.

5. Finally, the cell labeled by  $K_{13|524}$ is the common boundary of two cells, each of them consists of the curves with the following order of marked points:

\begin{center}
\begin{picture}(360,10)
\put(-5,0){\line(1,0){85}}
\put(275,0){\line(1,0){85}}
\put(0,-4){$\bullet$}
\put(20,-4){$\bullet$}
\put(40,-4){$\bullet$}
\put(60,-4){$\bullet$}
\put(80,-4){$\bullet$}
\put(0,-14){$0$}
\put(20,-14){$z_4$}
\put(40,-14){$1$}
\put(60,-14){$z_5$}
\put(80,-14){$\infty$}
\put(280,-4){$\bullet$}
\put(300,-4){$\bullet$}
\put(320,-4){$\bullet$}
\put(340,-4){$\bullet$}
\put(360,-4){$\bullet$}
\put(280,-14){$0$}
\put(300,-14){$z_5$}
\put(320,-14){$1$}
\put(340,-14){$z_4$}
\put(360,-14){$\infty$}
\end{picture}
\end{center}
correspondingly.

Then  $J>0$ for the cell represented on the left-hand side and  $J<0$ for the cell represented on the right-hand side. Therefore, the cell labeled by  $K_{13|524}$ is in the class $\Thh(\overline{{\mathcal M}_{0,5}^{\R}})$.
\end{proof}

\begin{lemma} \label{K23} All three boundary cells labeled by the pentagons with the diagonal cutting the sides marked by 2 and 3, i.e., labeled by the pentagons of the form

\begin{center}
\setlength{\unitlength}{0.5pt}
\begin{picture}(640,140)
\put(0,0){\line(1,0){80}}
\put(280,0){\line(1,0){80}}
\qbezier{(-30,70)(-15,35)(0,0)}
\qbezier{(110,70)(95,35)(80,0)}
\qbezier{(-30,70)(5,90)(40,110)}
\qbezier{(40,110)(75,90)(110,70)}
\qbezier{(250,70)(265,35)(280,0)}
\qbezier{(390,70)(375,35)(360,0)}
\qbezier{(250,70)(285,90)(320,110)}
\qbezier{(320,110)(355,90)(390,70)}
\qbezier{(-30,70)(25,70)(110,70)}
\qbezier{(250,70)(305,70)(390,70)}
\put(40,-24){$5$}
\put(103,27){$4$}
\put(75,90){$3$}
\put(-5,90){$2$}
\put(-32,27){$1$}
\put(320,-24){$4$}
\put(383,27){$5$}
\put(355,90){$3$}
\put(275,90){$2$}
\put(248,27){$1$}
\put(560,0){\line(1,0){80}}
\qbezier{(530,70)(545,35)(560,0)}
\qbezier{(670,70)(655,35)(640,0)}
\qbezier{(530,70)(565,90)(600,110)}
\qbezier{(600,110)(635,90)(670,70)}
\qbezier{(-30,70)(25,70)(110,70)}
\qbezier{(530,70)(585,70)(670,70)}
\put(600,-24){$1$}
\put(663,27){$5$}
\put(635,90){$3$}
\put(555,90){$2$}
\put(528,27){$4$}
\put(-15,130){$K_{23|451}$}
\put(265,130){$K_{23|541}$}
\put(545,130){$K_{23|514}$}
\end{picture}
\end{center}
are in the class $\Thh(\overline{{\mathcal M}_{0,5}^{\R}})$.
\end{lemma}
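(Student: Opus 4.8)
The plan is to reproduce, for the pair $\{2,3\}$, the argument already used in Lemma~\ref{K13} for the pair $\{1,3\}$. As in Definition~\ref{DefKoord} I would introduce a coordinate map $\varphi_3\colon{\mathcal M}_{0,5}^{\R}\to{\mathbb R}^2$ that prolongs across the boundary where $2$ and $3$ collide: parametrize ${\mathbb P}_1(\R)$ so that $w_1=0$, $w_4=1$, $w_5=\infty$, and set $\varphi_3({\mathbb P}_1(\R),w_1,\dots,w_5)=(w_2,w_3)$. On each of the three cells $K_{23|451}$, $K_{23|541}$, $K_{23|514}$ the node separates the component carrying $\{2,3\}$ from the one carrying $\{1,4,5\}$; the reference points $0,1,\infty$ stay attached to the latter, so $(w_2,w_3)$ stays finite and only degenerates to the diagonal $w_2=w_3$ at the boundary. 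Hence $\varphi_3$ extends across all three cells, and its standard orientation induces opposite orientations on the two maximal cells meeting along each of them.

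First I would compute the linear-fractional map $f$ carrying the $z$-marking $(z_1,\dots,z_5)=(0,1,\infty,z_4,z_5)$ to the $w$-marking, i.e. $f(0)=0$, $f(z_4)=1$, $f(z_5)=\infty$, so that $w_2=f(1)$ and $w_3=f(\infty)$ become explicit rational functions of $(z_4,z_5)$. Differentiating yields a single Jacobian $J=\det\partial(w_2,w_3)/\partial(z_4,z_5)$, valid on all three cells, of the form $(\text{monomial in }z_4,z_5,1-z_5)\cdot(z_5-z_4)$---the analogue of the factor $\tfrac{(1-z_5)(z_5-z_4)}{(z_4-1)^3z_5^2}$ that governed Lemma~\ref{K13}.

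Next, for each of the three cells in turn, I would write down the two maximal-dimensional cells that meet along it, read off from their pentagons the cyclic order of the five marked points, and convert this into the linear order of $z_4,z_5$ relative to $0,1,\infty$ on ${\mathbb P}_1(\R)$. Substituting these inequalities into $J$ fixes the sign of $J$ on each adjacent maximal cell; I expect the two signs to be opposite in every case. Opposite signs of $J$ together with opposite $(w_2,w_3)$-orientations force the $(z_4,z_5)$-orientations of the two cells to agree, so by Theorem~\ref{thmShU} the common boundary enters $\tfrac12\sum\partial\overline{k_j}$ twice with equal sign, survives modulo $2$, and thus lies in $\Thh(\overline{{\mathcal M}_{0,5}^\R})$.

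The main obstacle is purely the sign bookkeeping of the last step: correctly reading the point orders off each pentagon and its twist and placing $z_4,z_5$ among $0,1,\infty$, so that $J$ is evaluated without error on all six adjacent maximal cells. This can be bypassed by a symmetry observation: the transposition $(1\,2)$ of the marked points is a cellular self-diffeomorphism of $\overline{{\mathcal M}_{0,5}^\R}$, hence preserves $w_1$ and, by naturality of the cap product with the fundamental class over $\mathbb{Z}/2$, the class $\Thh$; since it maps $K_{13|542},K_{13|452},K_{13|524}$ onto $K_{23|541},K_{23|451},K_{23|514}$, the statement follows immediately from Lemma~\ref{K13}. I would carry out the direct computation to stay consistent with the preceding lemmas and record the symmetry argument as an independent check.
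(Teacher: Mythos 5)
Your main line --- introducing coordinates $(w_2,w_3)$ normalized by $w_1=0,\ w_4=1,\ w_5=\infty$, computing the Jacobian of the change from $(z_4,z_5)$, and comparing its signs on the two maximal cells adjoining each of the three boundary cells --- is exactly the paper's proof, and your expectations are confirmed there: the Jacobian is $J=\frac{z_5(z_5-z_4)}{z_4^3(1-z_5)^2}$ (a monomial in $z_4,z_5,1-z_5$ times $(z_5-z_4)$, as you predicted) and the signs are opposite on the two adjacent cells in all three cases, so each cell enters the sum of Theorem \ref{thmShU} twice with the same sign. One caution on your symmetry shortcut: naturality only shows the transposition $(1\,2)$ preserves the homology class $\Thh$, not the particular representing cycle produced by Theorem \ref{thmShU} (which depends on the chosen orientations of the top cells), so to transport Lemma \ref{K13} cell-by-cell you must additionally check that $(1\,2)$, acting as $(z_4,z_5)\mapsto(1-z_4,1-z_5)$, preserves the global $(z_4,z_5)$-orientation --- which it does, since its Jacobian is $(-1)^{n-3}=+1$ for $n=5$.
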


\begin{proof}
1. Points of these three cells are the stable curves such that the marked points with labels 2 and 3 are on the separate component. Thus to investigate these cells we have to consider the coordinates on  ${\mathcal M}_{0,5}^{\mathbb R}$ in which gluing of the points $z_2$ and $z_3$ can be seen. For this similar to Definition \ref{DefKoord} we construct the coordinatization map  $\varphi_3: {\mathcal M}_{0,5}^{\mathbb R} \to {\mathbb R}^2$. Let $({\mathbb P}_1(\R),u_1,\ldots, u_5)\in {{\mathcal M}_{0,5}^{\mathbb R}}$  be an element of the moduli space ${\mathcal M}_{0,5}^{\mathbb R}$. We choose the parametrization $u$ on the curve ${\mathbb P}_1(\R) $ such that  $u_1 =0, u_4=1, u_5=\infty$. Let us set $\varphi_3({\mathbb P}_1(\R),u_1,\ldots,u_5)=(u_2,u_3)$. 

To find the transition functions from $(z_4,z_5)$ to  $(u_2,u_3)$, we write the correspondence between different coordinates of the points:
$$
\begin{array}{cccccc} \mbox{ point } i& 1&2&3&4&5 \\u-\mbox{coordinate } & 0& u_2& u_3 & 1 &\infty \\ z-\mbox{coordinate } &0&1&\infty& z_4&z_5 \end{array}.$$ 
Then the corresponding linear-fractional transformation has the form
$$f(t)=\frac{z_4-z_5}{z_4}\frac{t}{t-z_5}$$
and one can easily write the explicit formulas for $u_2$ and $u_3$ as the functions of $z_4,z_5$.

Therefore, the Jacobian of this change of the coordinates has the form
$$J=\det\left(\begin{array}{cc} \frac{z_5}{(1-z_5)z_4^2} & \frac{z_4-1}{z_4(1-z_5)^2} \\ \frac{z_5}{z_4^2} & \frac{-1}{z_4} \end{array} \right) = \frac{z_5(z_5-z_4)}{z_4^3(1-z_5)^2}.$$

2. The cell labeled by $K_{23|451}$ is the common boundary of the two cells labeled by

\begin{center}
\setlength{\unitlength}{0.5pt}
\begin{picture}(360,140)
\put(0,0){\line(1,0){80}}
\put(280,0){\line(1,0){80}}
\qbezier{(-30,70)(-15,35)(0,0)}
\qbezier{(110,70)(95,35)(80,0)}
\qbezier{(-30,70)(5,90)(40,110)}
\qbezier{(40,110)(75,90)(110,70)}
\qbezier{(250,70)(265,35)(280,0)}
\qbezier{(390,70)(375,35)(360,0)}
\qbezier{(250,70)(285,90)(320,110)}
\qbezier{(320,110)(355,90)(390,70)}
\put(40,-24){$5$}
\put(103,27){$4$}
\put(75,90){$3$}
\put(-5,90){$2$}
\put(-32,27){$1$}
\put(320,-24){$5$}
\put(383,27){$4$}
\put(355,90){$2$}
\put(275,90){$3$}
\put(248,27){$1$}
\end{picture}
\end{center}

These cells correspond to the following orders of marked points on the curves:

\begin{center}
\begin{picture}(360,10)
\put(-5,0){\line(1,0){85}}
\put(275,0){\line(1,0){85}}
\put(0,-4){$\bullet$}
\put(20,-4){$\bullet$}
\put(40,-4){$\bullet$}
\put(60,-4){$\bullet$}
\put(80,-4){$\bullet$}
\put(0,-14){$z_4$}
\put(20,-14){$z_5$}
\put(40,-14){$0$}
\put(60,-14){$1$}
\put(80,-14){$\infty$}
\put(280,-4){$\bullet$}
\put(300,-4){$\bullet$}
\put(320,-4){$\bullet$}
\put(340,-4){$\bullet$}
\put(360,-4){$\bullet$}
\put(280,-14){$0$}
\put(300,-14){$z_5$}
\put(320,-14){$z_4$}
\put(340,-14){$1$}
\put(360,-14){$\infty$}
\end{picture}
\end{center}

\bigskip

Then $z_4<z_5<0<1<\infty$, hence, $J<0$ for the cell labeled by the left pentagon, and $0<z_5<z_4<1<\infty$,  $J>0$ for the cell labeled by the right pentagon. As in the previous cases we get that the cell labeled by $K_{23|451}$ is in the class $\Thh(\overline{{\mathcal M}_{0,5}^\R})$.

4. Similarly, the cell labeled by $K_{23|541}$ is the common bound of the cells labeled by

\begin{center}
\setlength{\unitlength}{0.5pt}
\begin{picture}(360,140)
\put(0,0){\line(1,0){80}}
\put(280,0){\line(1,0){80}}
\qbezier{(-30,70)(-15,35)(0,0)}
\qbezier{(110,70)(95,35)(80,0)}
\qbezier{(-30,70)(5,90)(40,110)}
\qbezier{(40,110)(75,90)(110,70)}
\qbezier{(250,70)(265,35)(280,0)}
\qbezier{(390,70)(375,35)(360,0)}
\qbezier{(250,70)(285,90)(320,110)}
\qbezier{(320,110)(355,90)(390,70)}
\put(40,-24){$4$}
\put(103,27){$5$}
\put(75,90){$3$}
\put(-5,90){$2$}
\put(-32,27){$1$}
\put(320,-24){$4$}
\put(383,27){$5$}
\put(355,90){$2$}
\put(275,90){$3$}
\put(248,27){$1$}
\end{picture}
\end{center}

These cells correspond to the following orders of marked points on the curves:

\begin{center}
\begin{picture}(360,15)
\put(-5,10){\line(1,0){85}}
\put(275,10){\line(1,0){85}}
\put(0,6){$\bullet$}
\put(20,6){$\bullet$}
\put(40,6){$\bullet$}
\put(60,6){$\bullet$}
\put(80,6){$\bullet$}
\put(0,-4){$z_5$}
\put(20,-4){$z_4$}
\put(40,-4){$0$}
\put(60,-4){$1$}
\put(80,-4){$\infty$}
\put(280,6){$\bullet$}
\put(300,6){$\bullet$}
\put(320,6){$\bullet$}
\put(340,6){$\bullet$}
\put(360,6){$\bullet$}
\put(280,-4){$0$}
\put(300,-4){$z_4$}
\put(320,-4){$z_5$}
\put(340,-4){$1$}
\put(360,-4){$\infty$}
\put(180,10){\large{и}}
\end{picture}
\end{center}

Then  $J<0$ for the cell labeled by the left pentagon, and  $J>0$ for the cell labeled by the right pentagon. Thus, the cell labeled by $K_{23|541}$ is in the class $\Thh(\overline{{\mathcal M}_{0,5}^\R})$.

5. Finally, the cell labeled by $K_{23|514}$  is the common bound of the cells which consist of the curves with the following order of marked points:

\begin{center}
\begin{picture}(360,15)
\put(-5,10){\line(1,0){85}}
\put(275,10){\line(1,0){85}}
\put(0,6){$\bullet$}
\put(20,6){$\bullet$}
\put(40,6){$\bullet$}
\put(60,6){$\bullet$}
\put(80,6){$\bullet$}
\put(0,-4){$z_5$}
\put(20,-4){$0$}
\put(40,-4){$z_4$}
\put(60,-4){$1$}
\put(80,-4){$\infty$}
\put(280,6){$\bullet$}
\put(300,6){$\bullet$}
\put(320,6){$\bullet$}
\put(340,6){$\bullet$}
\put(360,6){$\bullet$}
\put(280,-4){$z_4$}
\put(300,-4){$0$}
\put(320,-4){$z_5$}
\put(340,-4){$1$}
\put(360,-4){$\infty$}
\end{picture}
\end{center}

Then  $J>0$ for the cell represented on the left figure, and  $J<0$ for the cell represented on the right figure. Hence, the cell labeled by $K_{23|514}$ is in the class $\Thh(\overline{{\mathcal M}_{0,5}^\R})$.
\end{proof}

\begin{corollary}
Poincar\'e dual to the first Stiefel-Whitney class of $\overline{{\mathcal M}_{0,5}^\R}$, the class  $\Thh(\overline{{\mathcal M}_{0,5}^\R})$ contains all cells labeled by the pentagons of the form

\begin{center}
\setlength{\unitlength}{0.5pt}
\begin{picture}(80,100)
\put(0,0){\line(1,0){80}}
\qbezier{(-30,70)(-15,35)(0,0)}
\qbezier{(110,70)(95,35)(80,0)}
\qbezier{(-30,70)(5,90)(40,110)}
\qbezier{(40,110)(75,90)(110,70)}
\qbezier{(-30,70)(25,70)(110,70)}
\put(75,90){$i$}
\put(-5,90){$j$}
\end{picture}
\end{center}

where $1\le i,j\le 3$, and only these cells.
\end{corollary}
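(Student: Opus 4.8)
The plan is to deduce the corollary by combining the four preceding lemmas, the only extra ingredient being a counting argument that guarantees the cells treated there exhaust every codimension-1 cell. First I would recall from Construction \ref{l:oMn_cell_dec} that each codimension-1 cell of $\overline{{\mathcal M}_{0,5}^\R}$ is labeled by a pentagon with a single diagonal, and that this diagonal splits the five labeled edges into a pair $\{i,j\}$, cut off as a triangle, and the complementary triple, forming a quadrilateral. Hence every such cell is of the form $K_{ij|klm}$, and I would organize the cells according to the unordered pair $\{i,j\}$ of marked points lying on the small (triangular) component.

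Next I would split into two cases. If $\{i,j\}$ contains a label $\ge 4$, then writing the cell as $K_{ij|\cdots}$ with $4\le j\le 5$ puts us exactly in the situation of Lemma \ref{reShU}, which shows the cell is \emph{not} in $\Thh(\overline{{\mathcal M}_{0,5}^\R})$; this handles the seven partitions with pair $\{1,4\},\{2,4\},\{3,4\},\{1,5\},\{2,5\},\{3,5\},\{4,5\}$. If instead $\{i,j\}\subseteq\{1,2,3\}$, the pair is one of $\{1,2\},\{1,3\},\{2,3\}$, and for each the three associated cells lie in the class by Lemma \ref{K12345} together with Lemma \ref{K12435} (pair $\{1,2\}$), Lemma \ref{K13} (pair $\{1,3\}$), and Lemma \ref{K23} (pair $\{2,3\}$). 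To see that these two cases cover everything without overlap I would record the count: there are $\binom{5}{2}=10$ ways to split $\{1,2,3,4,5\}$ into a $2$-set and a $3$-set, and each split carries three distinct codimension-1 cells, namely the three one-dimensional cells of the $\overline{{\mathcal M}_{0,4}^\R}$ attached to the quadrilateral component. The seven mixed splits thus contribute the $21$ cells excluded by Lemma \ref{reShU}, and the three splits inside $\{1,2,3\}$ contribute the remaining $9$ cells, all included by Lemmas \ref{K12345}--\ref{K23}.

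The step I expect to require the most care is not any computation but the verification that the three pentagon representatives attached to a fixed pair $\{i,j\}\subseteq\{1,2,3\}$ --- for instance $K_{12|345}$, $K_{12|435}$, $K_{12|354}$ --- are pairwise distinct as cells and jointly realize all three one-dimensional cells of the relevant $\overline{{\mathcal M}_{0,4}^\R}$. This is precisely where one must invoke that pentagons related by the dihedral action and by twists label the same cell (Construction \ref{l:oMn_cell_dec1}), so that ``three cells per split'' is neither an overcount nor an undercount. Once this identification is secured, Lemma \ref{reShU} together with Lemmas \ref{K12345}--\ref{K23} classify every codimension-1 cell, and the corollary follows: $\Thh(\overline{{\mathcal M}_{0,5}^\R})$ consists exactly of the nine cells whose diagonal cuts off two sides both labeled from $\{1,2,3\}$.
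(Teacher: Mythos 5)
Your proposal is correct and follows the same route as the paper: the corollary is exactly the combination of Lemma \ref{reShU} (excluding every cell whose cut-off pair contains a label $\ge 4$) with Lemmas \ref{K12345}--\ref{K23} (including all nine cells with pair inside $\{1,2,3\}$), which is all the paper itself intends, since it offers no separate proof. Your explicit bookkeeping --- $\binom{5}{2}=10$ splits, three codimension-1 cells per split coming from the three top cells of the $\overline{{\mathcal M}_{0,4}^{\mathbb R}}$ factor, hence $21$ excluded plus $9$ included cells --- is a correct and welcome elaboration of the exhaustiveness step the paper leaves implicit.
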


\subsection{ Computation of  $\Th(\overline{{\mathcal M}_{0,n}^\R})$ for  $n\ge 6$}

In this section we introduce special coordinates, which provide the possibility to investigate the situation nearby some  interesting for us boundaries. In order to do this we draw the curve ${\mathbb P}_1(\R)$  in the form of hyperbola  $xy=\ep$. Approaching the boundary correspond to taking the limit $\ep\to 0$ under the fixed coordinates  $x$ or $y$ of marked points on the curve. Going to the boundary transforms the hyperbola to a pair of intersecting lines. As  local coordinates in a neighborhood  of the boundary we use the parameter  $\varepsilon$, $x$-coordinates of the points with positive $x$-coordinates, and $y$-coordinates for the points with negative $y$-coordinates. Coordinates of some 4 points we fix, then the rest  $n-4$ points and  $\ep$ provide exactly the required $n-3$ coordinates. Below, see the formulas (\ref{eq:J>}) and (\ref{eq:J}), we show that for sufficiently small $\ep\ne 0$ the Jacobian of the corresponding transition function can not be zero, thus this parametrization indeed provides local coordinates on ${\mathcal M}_{0,n}^\R$.

\begin{definition}
Let us define the coordinates in the following way. Let the boundary under consideration is labeled by the polygon $K_{l_1,\ldots,l_m\vert k_1,\ldots,k_{n-m}}$, which is drawn at the middle part of Figure \ref{param1}. This boundary is a common boundary of the cells labeled by the polygons $K_{l_1,\ldots,l_m, k_1,\ldots,k_{n-m}}$ and  $K_{l_1,\ldots,l_m, k_{n-m},\ldots,k_1}$, drawn on the left side and the right side at Figure \ref{param1}, correspondingly. The cell labeled by  $K_{l_1,\ldots,l_m, k_1,\ldots,k_{n-m}}$ is parametrized by  $\ep> 0$ and corresponds to the hyperbola lying in the intersection of right and upper semi-planes and in the intersection of left and lower semi-planes at Figure \ref{param}. Similarly, the cell labeled by  $K_{l_1,\ldots,l_m, k_{n-m},\ldots,k_1}$  is parametrized by $\ep< 0$ and corresponds to the hyperbola lying in the intersection of right and lower semi-planes and in the intersection of left and upper semi-planes at Figure \ref{param}. We choose 2 points in each of two parts of the polygon  $K_{l_1,\ldots,l_m\vert k_1,\ldots,k_{n-m}}$, separated by the diagonal. We denote the chosen points by  $l_{i_1},l_{i_2}$ and $k_{j_1},k_{j_2}$, correspondingly. Let us fix their coordinates: abscissas $x(l_{i_1})=1, x(l_{i_2})=2$ and ordinates $ y(k_{j_1})=-1,y(k_{j_2})=-2$. Then the coordinates in the neighborhood of this boundary are the list of abscissas of the points $\{l_1,\ldots,l_m\}\setminus \{l_{i_1},l_{i_2}\}$, the list of ordinates of the points $\{ k_1,\ldots,k_{n-m}\}\setminus\{k_{j_1},k_{j_2}\}$ and the value of the parameter  $\ep$.
\end{definition}

\begin{figure}[h]
\centering\epsfig{figure=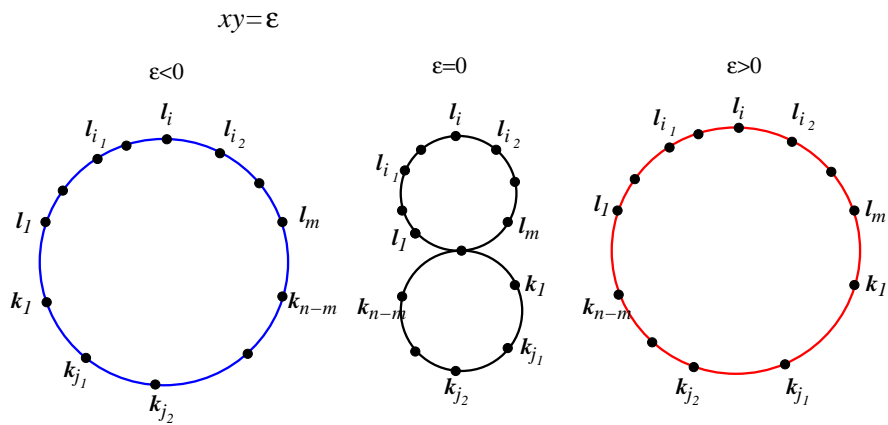,width=.7\linewidth}
\caption{$xy=\varepsilon$} \label{param1}
\end{figure}

\begin{re}
Note that the points   $l_{i_1},l_{i_2}$ и $k_{j_1},k_{j_2}$  are not necessary neighbour, the first, the last, etc., there are no restrictions on their positioning on the polygon, except that the first and the second pairs are divided by the diagonal.
\end{re}

\begin{lemma} \label{lem123}
Let $\sigma\in S_3$ be a permutation of the points $0,1$, and $\infty$. Let  
$\varphi_\sigma: {{\mathcal M}_{0,n}^{\mathbb R}} \to {\mathbb R}^{n-3}$ be a coordinatization map, constructed similarly to Definition  \ref{DefKoord} in the following way. If  $({\mathbb P}_1(\R),w_1,\ldots, w_n)\in {{\mathcal M}_{0,n}^{\mathbb R}}$ is a point of the moduli space  ${\mathcal M}_{0,n}^\R$, and parametrization on the curve  ${\mathbb P}_1(\R) $ is chosen in such a way that $w_3 =\sigma(\infty), w_2=\sigma(1),w_1=\sigma(0)$, then $\varphi_\sigma({\mathbb P}_1(\R),w_1,\ldots,w_n)=(w_4,\ldots,w_n)$. Let the orientation of all cells is provided by the standard orientation of the space  $\R^{n-3}=\{(w_4,\ldots,w_n)\}$.

Then either the orientation of all cells induced by the coordinates  $\varphi_\sigma$ coincides with the orientation of these cells induced by the coordinates  $\varphi$ or for all cells these orientations are opposite. \end{lemma}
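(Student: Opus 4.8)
The plan is to recognize the transition between the two coordinate systems as a single real linear-fractional transformation applied to each free coordinate, and then to exploit the elementary fact that a real M\"obius map has a derivative of constant sign.

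First I would fix, for the given $\sigma\in S_3$, the unique linear-fractional transformation $g_\sigma(t)=\frac{at+b}{ct+d}$ sending $0\mapsto\sigma(0)$, $1\mapsto\sigma(1)$, $\infty\mapsto\sigma(\infty)$; since all three images lie in $\{0,1,\infty\}\subset{\mathbb P}_1(\R)$, its coefficients $a,b,c,d$ are real. For a point $({\mathbb P}_1(\R),w_1,\ldots,w_n)\in{{\mathcal M}_{0,n}^{\mathbb R}}$ the map $\varphi$ of Definition \ref{DefKoord} records the positions $(z_4,\ldots,z_n)$ after the normalization $z_1=0,z_2=1,z_3=\infty$, whereas $\varphi_\sigma$ records $(w_4,\ldots,w_n)$ after the normalization $w_1=\sigma(0),w_2=\sigma(1),w_3=\sigma(\infty)$. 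Applying $g_\sigma$ to the $\varphi$-normalized representative turns it into the $\varphi_\sigma$-normalized one, and by uniqueness of the normalization this forces $w_i=g_\sigma(z_i)$ for all $i$. Hence the change-of-coordinates map is the coordinatewise transformation $(z_4,\ldots,z_n)\mapsto(g_\sigma(z_4),\ldots,g_\sigma(z_n))$, and its Jacobian matrix is diagonal, so
\[
J=\prod_{i=4}^{n} g_\sigma'(z_i).
\]

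Next I would record that exactly one of $\sigma(0),\sigma(1),\sigma(\infty)$ equals $\infty$, so in the $\varphi_\sigma$-normalization the point at infinity is occupied by one of the first three marked points; consequently $w_4,\ldots,w_n$ stay finite and no $z_i$ with $i\ge4$ can be the pole of $g_\sigma$. This guarantees that $J$ is well-defined and nonzero on the whole open moduli space. The key point is then the computation $g_\sigma'(t)=\frac{ad-bc}{(ct+d)^2}$: the denominator is a positive square wherever it is defined, so $\operatorname{sign} g_\sigma'(t)=\operatorname{sign}(ad-bc)$ does not depend on $t$. Therefore $\operatorname{sign}J=\big(\operatorname{sign}(ad-bc)\big)^{\,n-3}$ is a single sign determined by $\sigma$ and $n$ alone, independent of the point of ${{\mathcal M}_{0,n}^{\mathbb R}}$ and hence of the cell.

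Finally I would translate this into the orientation statement: the orientations on the maximal cells induced by $\varphi$ and by $\varphi_\sigma$ are both pulled back from the standard orientation of $\R^{n-3}$, so they agree on a given cell exactly when $J>0$ there. Since $\operatorname{sign}J$ is globally constant, either $J>0$ on every cell, in which case all the orientations coincide, or $J<0$ on every cell, in which case they are all opposite; this is precisely the claimed dichotomy. The only step needing genuine care, rather than being a real obstacle, is the pole-avoidance observation that keeps $J$ nonzero; the heart of the argument is simply the constancy of the sign of the derivative of a real M\"obius transformation.
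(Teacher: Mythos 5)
Your proof is correct, and it takes a slightly different route than the paper. The paper decomposes the permutation $\sigma$ into the two generators $f_1(t)=1-t$ and $f_2(t)=\tfrac1t$, computes the (diagonal) Jacobian of each coordinatewise substitution explicitly, obtaining $(-1)^{n-3}$ and $(-1)^{n-3}\prod_{k=4}^n z_k^{-2}$ respectively, and then invokes multiplicativity of Jacobians under composition to get a cell-independent sign for a general $\sigma$. You instead handle the general transition in a single step: the change of coordinates is the coordinatewise application of one real linear-fractional map $g_\sigma$, so the Jacobian is $\prod_{i=4}^n g_\sigma'(z_i)$, and the formula $g_\sigma'(t)=\frac{ad-bc}{(ct+d)^2}$ shows each factor has the fixed sign $\operatorname{sign}(ad-bc)$, whence $\operatorname{sign}J=\bigl(\operatorname{sign}(ad-bc)\bigr)^{n-3}$ is the same on every cell. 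The shared mechanism is the same (a diagonal Jacobian whose entries have point-independent sign), but your version avoids the generator decomposition and the composition step, is uniform over all of $S_3$, and explicitly records the pole-avoidance point (that $z_4,\ldots,z_n$ never hit $g_\sigma^{-1}(\infty)\in\{0,1,\infty\}$), which the paper's proof leaves implicit. What the paper's computation buys in exchange is concrete sign information: for each generator the orientation is reversed exactly when $n$ is even, a parity statement that your argument subsumes in the unevaluated quantity $\operatorname{sign}(ad-bc)^{n-3}$ but does not spell out; since Lemma \ref{lem123} only asserts the dichotomy, both arguments are complete proofs.
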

\begin{proof}
All permutations of the values 0, 1, $\infty$ on the projective line ${\mathbb P}_1(\R)$  are compositions of the transformations  $f_1(t)=1-t$ and $f_2(t)=\frac1t$. Hence change of the coordinates  $z$ to the coordinates $w$ are compositions of the maps $z_i\to 1- w_i$ for all $i=4,\ldots,n$ and $z_i\to \frac1{ w_i}$ for all $i=4,\ldots,n$. Since these transformations are diagonal, the corresponding Jacobians are  
$$J_1=\prod_{k=4}^n \frac{\partial(1-z_k)}{\partial z_k}=(-1)^{n-3}$$
and
$$J_2=\prod_{k=4}^n \frac{\partial(\frac1{z_k})}{\partial z_k}=\prod_{k=4}^n \frac{-1}{z_k^2}=(-1)^{n-3}\prod_{k=4}^n \frac1{z_k^2}.$$
So, in both cases orientation does not depend on the cell, but depends on the oddity of $n$. If $n$ is even, then change of the coordinates converts the orientation of any cell to the opposite one. If $n$ is odd then the orientation of any cell remains the same. 

Composition of transformations corresponds to the multiplication of the Jacobians, hence in all the cases either the change of the coordinates changes orientation of all cells or it leaves orientations of all cells fixed. 
\end{proof}

\subsubsection{The case when the points  1, 2, 3 are in the same component of the boundary}


In this case corresponding cells and coordinates on them are shown at Figure  \ref{param0}, \ref{param} and \ref{123}.
\begin{figure}[h]
\centering\epsfig{figure=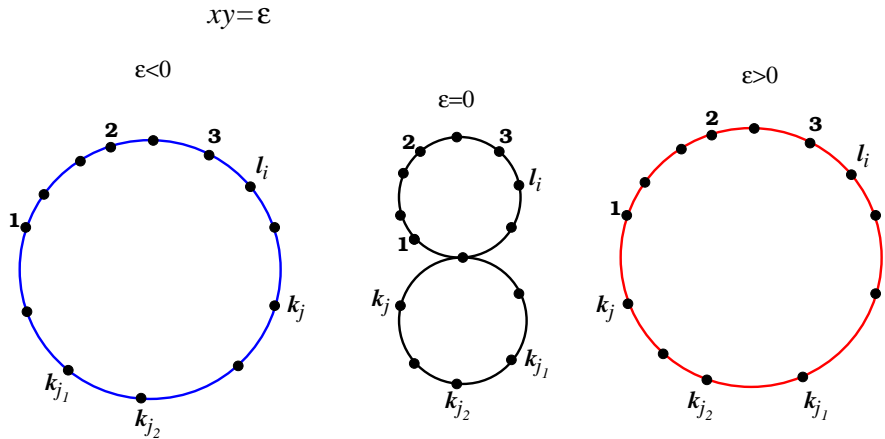,width=.7\linewidth}
\caption{$xy=\varepsilon$} \label{param0}
\end{figure}
\begin{figure}[h]
\centering\epsfig{figure=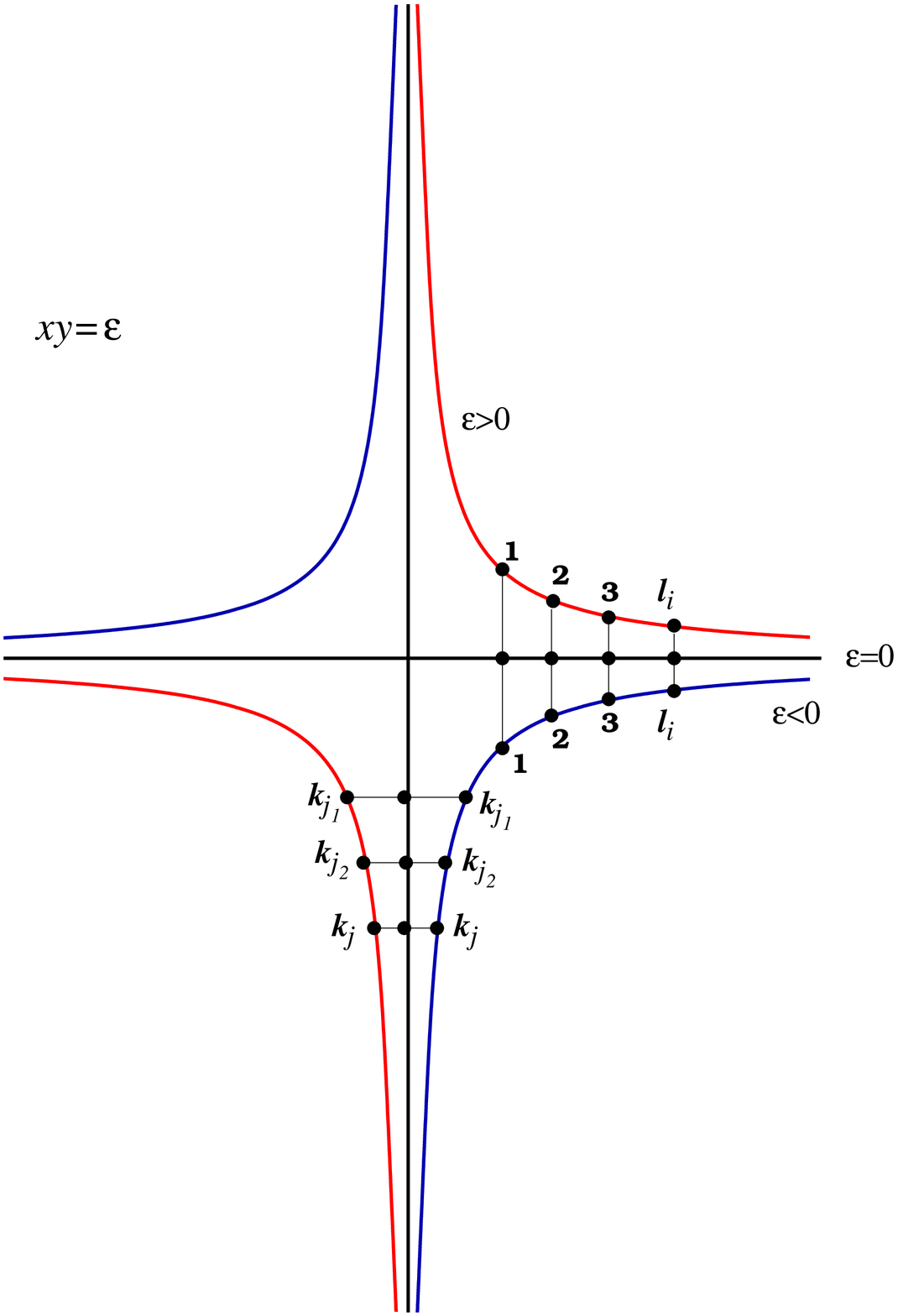,width=.5\linewidth}
\caption{$xy=\varepsilon$} \label{param}
\end{figure}
\begin{figure}[h]
\centering\epsfig{figure=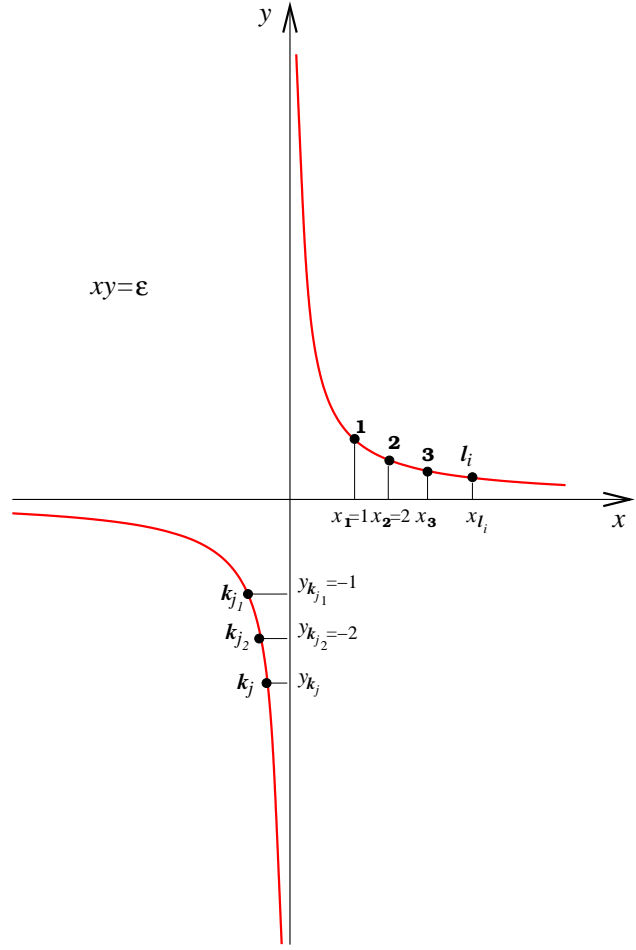,width=.5\linewidth}
\caption{$xy=\varepsilon$} \label{123}
\end{figure}

\begin{lemma}  Let $n\ge 6$ and all three points  1, 2, 3 are in one component of the boundary. Then corresponding cell is in the class $\Th(\overline{{\mathcal M}_{0,n}^\R})$  if and only if the component of the boundary which does not contain the points  1, 2,  3, contain odd number of the marked points. \label{lem1comp}
\end{lemma}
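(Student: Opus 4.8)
The plan is to compare, near the boundary cell $c=K_{l_1,\dots,l_m\vert k_1,\dots,k_{n-m}}$ under consideration, the hyperbola coordinates of the preceding Definition with the global coordinates $\varphi$ of Definition \ref{DefKoord}, exactly as in the proofs of Lemmas \ref{K12345}--\ref{K23}. The hyperbola coordinates are honest local coordinates for small $\ep\neq0$ (the nonvanishing of the relevant Jacobian for $\ep\neq0$ being the content of the forthcoming formulas (\ref{eq:J>}), (\ref{eq:J})), in which $c$ is the hypersurface $\{\ep=0\}$ and the two maximal cells adjacent to $c$ are $\{\ep>0\}$ and $\{\ep<0\}$. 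Orienting both of them by the standard orientation of the hyperbola-coordinate space $\R^{n-3}$, they induce \emph{opposite} orientations on $c$, since their outward normals point in the $\mp\ep$ directions. By the criterion behind Lemma \ref{reShU} (that is, Theorem \ref{thmShU}), $c$ lies in $\Th(\overline{{\mathcal M}_{0,n}^\R})$ iff the two adjacent maximal cells induce the \emph{same} orientation on $c$ in the global coordinates $\varphi$. Passing from the hyperbola coordinates to $\varphi$ multiplies the orientation of each maximal cell by the sign of the transition Jacobian $J$ on that cell; hence $c\in\Th(\overline{{\mathcal M}_{0,n}^\R})$ if and only if $\mathrm{sign}\,J$ is \emph{opposite} on $\{\ep>0\}$ and $\{\ep<0\}$.

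Next I would write $J$ explicitly. Parametrize ${\mathbb P}_1(\R)$ by $t=x$, so that $y=\ep/t$; since the three points $1,2,3$ all lie on the $l$-component, fix $z_1=0$, $z_2=1$, $z_3=\infty$ as in Definition \ref{DefKoord} and let $g$ be the Möbius map carrying the abscissas of $1,2,3$ to $0,1,\infty$. Each $l$-point keeps a finite, $O(1)$ abscissa, so its coordinate $z_p=g(x_p)$ is $O(1)$ and independent of $\ep$ and of the $k$-coordinates. Each $k$-point, on the other hand, has $t$-coordinate $t_q=\ep/y_q\to0$, so $z_q=g(\ep/y_q)=g(0)+g'(0)\,\ep/y_q+O(\ep^2)$; in particular $\partial z_q/\partial y_q=O(\ep)$ while $\partial z_q/\partial\ep=O(1)$. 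Ordering the rows of $J$ by the free $l$-points and then the $k$-points, and the columns by the free $l$-abscissas, the free $k$-ordinates, the slots used to normalise, and $\ep$, one obtains a block matrix whose only $\ep$-dependent factors are the diagonal derivatives $\partial z_q/\partial y_q$ of the \emph{free} $k$-points.

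The crux is the sign of $J$ as a function of $\mathrm{sign}\,\ep$. Each free $k$-point contributes a diagonal factor $\partial z_q/\partial y_q=-g'(0)\,\ep/y_q^{2}+O(\ep^2)$, whose sign is $-\,\mathrm{sign}\,g'(0)\cdot\mathrm{sign}\,\ep$, because $y_q^{2}>0$; and there are exactly $n-m-2$ free $k$-points, two of the $n-m$ being used for the normalisation $y(k_{j_1})=-1$, $y(k_{j_2})=-2$. The remaining factor of $J$ — the determinant of the block built from the $l$-derivatives $g'(x_p)$, the two normalised $k$-rows, and the $\ep$-column — tends to a nonzero limit as $\ep\to0$, so its sign is constant for all sufficiently small $\ep\neq0$ and does not depend on $\mathrm{sign}\,\ep$. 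Hence $\mathrm{sign}\,J=\mathrm{sign}(\ep)^{\,n-m-2}\cdot(\text{a sign independent of }\mathrm{sign}\,\ep)$, so $\mathrm{sign}\,J$ flips between $\{\ep>0\}$ and $\{\ep<0\}$ precisely when $n-m-2$, equivalently $n-m$, is odd. Since $n-m$ is the number of marked points on the component not containing $1,2,3$, this is exactly the asserted criterion, and Lemma \ref{lem123} guarantees that the conclusion is insensitive to the particular normalisation of the global chart.

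The main obstacle is the bookkeeping that makes this factorisation rigorous. One must verify that the $\ep$-independent cofactor is genuinely nonzero for all small $\ep\neq0$, so that the leading order of $J$ in $\ep$ is really $\ep^{\,n-m-2}$ and $J$ never vanishes on either cell; and one must reconcile the two normalisations, which pin down four points on the hyperbola but only three points for $\varphi$, the extra constraint being absorbed by allowing $\ep$ to vary so that the parametrisation is well defined. Carrying out the explicit diagonal and $2\times2$ determinant computations produces the closed forms (\ref{eq:J>}) and (\ref{eq:J}) for $\ep>0$ and $\ep<0$, confirming both the nonvanishing and the sign pattern above; the remainder is the formal orientation argument already employed in the case $n=5$.
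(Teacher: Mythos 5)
Your proposal is correct and takes essentially the same route as the paper's own proof: hyperbola coordinates $xy=\ep$ near the boundary cell, comparison of the orientations of the two adjacent maximal cells $\{\ep>0\}$ and $\{\ep<0\}$ via the sign of the transition Jacobian to the global $z$-chart of Definition \ref{DefKoord}, a block factorization of that Jacobian isolating the $n-m-2$ diagonal factors $\partial z_q/\partial y_q=O(\ep)$ coming from the free $k$-points, and the conclusion that the sign of $J$ flips across $\ep=0$ exactly when $n-m$ is odd. The only difference is presentational: the paper carries out the explicit closed-form computation (formula (\ref{eq:J>})) with the normalization $l_{i_1}=1$, $l_{i_2}=2$, which is precisely the verification of the nonvanishing of the $\ep$-independent cofactor that your leading-order argument defers to the bookkeeping step.
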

\begin{proof}
 Up to the rename of the marked points we can and we do assume that $l_{i_1},l_{i_2}\in \{1,2,3\}$. Applying Lemma \ref{lem123} we can set $l_{i_1}=1,l_{i_2}=2$. Then we denote $l_{i_3}=3$ for some $i_3$. Hence we obtain that abscissas of the points 1 and 2 are equal to 1 and 2, correspondingly. We denote the abscissa of the point 3  by $x_3$ and abscissas of the points $l_i\in \{l_1,\ldots,l_m\}\setminus \{1,2,3\}$ by $x_{l_i}$. Here $m\ge 3$ by the assumptions of the lemma asserting that the points 1,2 and 3 lie in one component.

Also we denote the ordinates of the points  $k_j\in\{ k_1,\ldots,k_{n-m}\}\setminus\{k_{1},k_{2}\}$ by $y_{k_j}$. 

Let us find the transition functions from the chosen coordinates $x,$  $y,$  $\ep$ to the coordinates  $z$ introduced in Definition~\ref{DefKoord}.

Point-wise the change of coordinates looks as follows:
$$
\begin{smallmatrix} i& 1&2&3& \ldots & l_i& \ldots&k_j& \ldots& k_{j_1}&k_{j_2} \\ z-\mbox{coordinates } &0&1&\infty& \ldots &z_{l_i}& \ldots&z_{k_j}& \ldots& z_{k_{j_1}} &z_{k_{j_2}} \\x-\mbox{coordinates  } &1&2&x_3 &\ldots& x_{l_i}& \ldots & \frac{\ep}{y_{k_j}}& \ldots & -\ep & -\frac{\ep}{2} \\ y-\mbox{coordinates  } &\ep&\frac{\ep}2&\frac{\ep}{x_3}& \ldots &\frac{\ep}{x_{l_i}}& \ldots&y_{k_j}& \ldots&-1&-2 \end{smallmatrix}$$ 

We are looking for the linear-fractional transformation  
$f(t)=\frac{at+b}{ct+d}$  which maps  $x$-coordinates of the points 1, 2 and 3  to their $z$-coordinates. Then $f(t)=(2-x_3)\frac{t-1}{t-x_3}$.

We compute the Jacobian of the change of the coordinates  $\ep, x_{l_i},y_{k_j}$  by the coordinates   $z_4,\ldots, z_n$, writing rows and columns of the matrix of partial derivatives in the following order. The rows  $z_{k_{1}}$, $z_{k_{2}} $, $z_{l_1}, \ldots, z_{l_m}$, $z_{k_1}, \ldots, z_{k_{n-m}}$ (without  $z_{l_{i_1}},\ z_{l_{i_2}},\ z_{l_{i_3}},\ z_{k_{j_1}}$, $z_{k_{j_2}} $). The columns: $\ep$, $x_3$, $x_{l_1}, \ldots, x_{l_m}$, $y_{k_1}, \ldots, y_{k_{n-m}}$ (without  $x_{l_{i_1}},\ x_{l_{i_2}},$ $y_{k_{j_1}}$, $y_{k_{j_2}} $, which are fixed, and $x_{l_{i_3}}$ which is in the second column). Note that $\frac{\partial z_{l_i}}{\partial x_{l_s}}=0$ if $s\ne i$, $\frac{\partial z_{l_i}}{\partial y_{k_t}}=0$ for all $i$ and $t$, $\frac{\partial z_{k_j}}{\partial x_{l_s}}=0$ for all $j$ and $s$. Also $\frac{\partial z_{k_j}}{\partial y_{k_t}}=0$ if  $t\ne j$. 

Then the Jacobi matrix of change of the coordinates has the following block-triangular form:
$$ F=\left(\begin{array}{ccc}F_1 & O_1 & O_2 \\ X_1 & D_1 & O_3\\ X_2 & O_4 & D_2 \end{array}\right),$$
where $$F_1=\left(\begin{array}{cc} \frac{\partial z_{k_{j_1}}}{\partial \ep} &  \frac{\partial z_{k_{j_1}}}{\partial x_3} \\  \frac{\partial z_{k_{j_2}}}{\partial \ep} &  \frac{\partial z_{k_{j_2}}}{\partial x_3}\end{array}\right),$$

$O_1, O_2, O_3, O_4$ are the zero matrices of the sizes $2\times (m-3)$, $2\times (n-m-2)$, $(m-3)\times (n-m-2)$ and $(n-m-2)\times (m-3)$, correspondingly, 

$X_1$ and $X_2$ are unknown matrices of sizes $ (m-3) \times 2$ and $(n-m-2) \times 2$, correspondingly,

$D_1$ is a diagonal  $ (m-3) \times  (m-3)$-matrix of partial derivatives $ \frac{\partial z_{l_{i}}}{\partial  x_{l_{i}}} $, and $D_2$ is a diagonal  $ (n-m-2) \times  (n-m-2)$-matrix  of partial derivatives $ \frac{\partial z_{k_{j}}}{\partial  y_{k_{j}}} $.

Then Jacobian $J=\det F=\det F_1 \cdot \det D_1\cdot \det D_2$. Let us compute each factor separately.

1. Determinant of the matrix $F_1$.
$$J_1=\det F_1=\det \left(\begin{array}{cc}  \frac{-(2-x_3)(1-x_3)}{(\ep+x_3)^2} & \frac{-(\ep+1)(\ep+2)}{(\ep+x_3)^2} \\ \frac{-(2-x_3)(2-2x_3)}{(\ep+2x_3)^2} & \frac{-(\ep+2)(\ep +4)}{(\ep+2x_3)^2}\end{array}\right) =$$ $$=
\frac{(2-x_3)(1-x_3)(\ep+2)}{(\ep+x_3)^2}{(\ep+2x_3)^2} \det\left( \begin{array}{cc}1&\ep+1 \\ 2& \ep+4\end{array}\right) = \frac{(2-x_3)(1-x_3)(4-\ep^2)}{(\ep+x_3)^2}{(\ep+2x_3)^2}.$$

2. Diagonal entries of  $D_1$ and $D_2$.

Since $z_{l_i}=(2-x_3)\cdot \frac{x_{l_i}-1}{x_{l_i}-x_3}$, we get $ \frac{\partial z_{l_{i}}}{\partial  x_{l_{i}}} =(2-x_3)(x_3-1)\cdot \frac{-1}{(x_{l_i}-x_3)^2}.$ Similarly, $$z_{k_j}= (2-x_3)\cdot \frac{\frac{\ep}{y_{k_j}}-1}{ \frac{\ep}{y_{k_j}}-x_3},$$
hence, $ \frac{\partial z_{k_{j}}}{\partial  y_{k_{j}}} =\ep(2-x_3)(x_3-1)\cdot \frac{1}{(\ep-y_{k_j}x_3)^2}.$ Determinant of the product of  $D_1$ and $D_2$ is the product of all found values.

Then 
\begin{equation} \label{eq:J>}  J= J_1 \det D_1\det D_2= (2-x_3)^{n-2}(x_3-1)^{n-2}(-1)^{m+1}\ep^{n-m-2}\cdot J_2,\end{equation}
where
$$J_2=\frac{4-\ep^2}{(\ep+x_3)^2(\ep+2x_3)^2}\prod_{i=4}^m \frac1{(x_{l_i}-x_3)^2} \prod_{j\in \{1,\ldots,n-m\}\setminus \{j_1,j_2\}}  \frac1{(\ep-y_{k_j}x_3)^2}.$$
Note that in the neighborhood of $\ep=0$ it holds that $J_2>0$, so this factor does not fluent on the sign of the Jacobian.

It follows from the formula  (\ref{eq:J>}) that if $(n-m)$ is even, then Jacobian $J$ does not change the sign while $\ep$ goes from $\ep<0$ to $\ep>0$. If $(n-m)$ is odd, then changes.

Let $(n-m)$ be even, i.e., there is an even number of marked points in the component of the boundary which does not contain the points 1, 2, 3. The cells of maximal dimension corresponding to  $\ep>0$ and $\ep<0$ have opposite orientations. Hence  arguing as in the proof of Lemma~\ref{K12345} we obtain that  in the coordinates $(z_4,\ldots, z_n) $ these cells have opposite orientations. Therefore, their common boundary is included twice into the expression for $\Th(\overline{{\mathcal M}_{0,n}^\R})$ and with the opposite signs, hence its influence is zero. 

Let now  $(n-m)$ be odd,  i.e., there is an odd number of marked points in the component of the boundary which does not contain the points 1, 2, 3. The cells of maximal dimension corresponding to  $\ep>0$ and $\ep<0$ have opposite orientations. Hence  we obtain that  in the coordinates $(z_4,\ldots, z_n) $ these cells have the same orientations, since the sign of $J$ is changed in the point $\ep=0$. Therefore, their common boundary is included twice with the same sign into the expression for $\Th(\overline{{\mathcal M}_{0,n}^\R})$. After dividing by 2 and taking the result modulus 2 we get that the common boundary of these two cells is in the class  $\Th(\overline{{\mathcal M}_{0,n}^\R})$. This concludes the proof of the lemma.

\end{proof}

\subsubsection{The case when the points  1, 2, 3 are in the different components of the boundary}
The case under consideration is presented at Figure 14.
\begin{figure}[h]
\centering\epsfig{figure=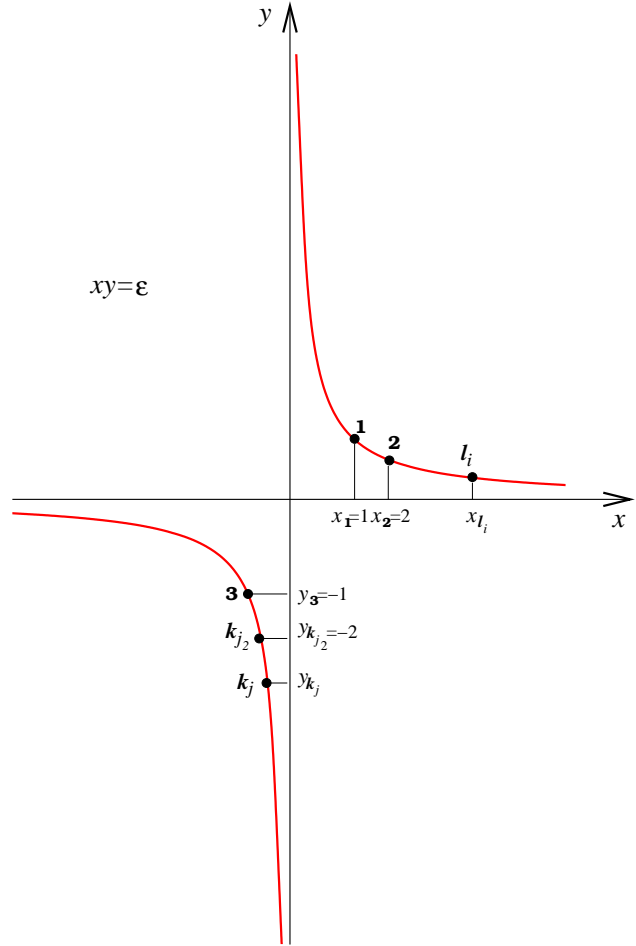,width=.5\linewidth}
\caption{$xy=\varepsilon$} \label{12}
\end{figure}

\begin{lemma} Let $n\ge 6$ and only two of the points 1, 2, 3 are in the same component of the boundary $K$. Then $K$ is in the class   $\Th(\overline{{\mathcal M}_{0,n}^\R})$ if and only if there are odd number of marked points on the component of $K$ which contains the third point from the set $\{1,2,3\}$.
\end{lemma}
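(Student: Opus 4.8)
The plan is to mimic the structure of the proof of Lemma \ref{lem1comp}, adapting the hyperbola parametrization $xy=\varepsilon$ to the new combinatorial situation. The boundary $K$ is labeled by a polygon split by one diagonal into two components; by hypothesis exactly two of the marked points $\{1,2,3\}$ lie in one component (call it the $l$-side) and the remaining point of $\{1,2,3\}$ lies in the other (the $k$-side). First I would use Lemma \ref{lem123} to normalize: the two special points $0,1,\infty$ whose positions fix the linear-fractional gauge should be chosen among $1,2,3$. Concretely I would place the two points of $\{1,2,3\}$ that share the $l$-side at the fixed abscissas $x=1,x=2$ (so they play the role of $l_{i_1},l_{i_2}$), and treat the third point of $\{1,2,3\}$, lying on the $k$-side, as a point whose coordinate is to be computed. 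This is the essential difference from Lemma \ref{lem1comp}, where all three points sat on the same side and so $x_3$ was an $l$-abscissa; here the third point contributes a $y$-ordinate on the opposite component.

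Next I would write down the point-wise dictionary between the $z$-coordinates (with $z_1=0,z_2=1,z_3=\infty$) and the hyperbola coordinates $x,y,\varepsilon$, exactly as in the table in Lemma \ref{lem1comp}, but now with the third distinguished point appearing in the $k$-block with ordinate, say, $y_3=\varepsilon/x_3$ or directly as an ordinate variable. I would then determine the linear-fractional map $f$ sending the abscissas of the two $l$-side special points and the ordinate-data of the third point to $0,1,\infty$, and assemble the Jacobian matrix of the change from $(\varepsilon, \text{abscissas}, \text{ordinates})$ to $(z_4,\ldots,z_n)$. As before this matrix is block-triangular: a $2\times 2$ corner block $F_1$ built from the derivatives of the two $z$-coordinates attached to the fixed ordinate points $k_{j_1},k_{j_2}$, together with diagonal blocks $D_1,D_2$ coming from the remaining $l$- and $k$-points. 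The factorization $J=\det F_1\cdot\det D_1\cdot\det D_2$ should again isolate a single power of $\varepsilon$ whose exponent is controlled by the number of points on the component opposite to the diagonal.

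The decisive computation is the power of $\varepsilon$ appearing in $J$, exactly as formula (\ref{eq:J>}) produced $\varepsilon^{n-m-2}$ in the previous case. Because now the distinguished third point sits on the $k$-side rather than the $l$-side, the count of free $\varepsilon/y$-substitutions (hence the number of $\varepsilon$-factors pulled out of $D_2$ and out of $F_1$) shifts by one relative to Lemma \ref{lem1comp}. The claim to verify is that the resulting exponent of $\varepsilon$ has the parity of the number of marked points on the component of $K$ that contains the third point of $\{1,2,3\}$; equivalently, $J$ changes sign as $\varepsilon$ passes through $0$ precisely when that component carries an odd number of marked points. Granting this, the conclusion is immediate by the same orientation argument as before: the two maximal cells adjacent to $K$ (corresponding to $\varepsilon>0$ and $\varepsilon<0$) always induce opposite orientations in the hyperbola coordinates, so a sign change of $J$ at $\varepsilon=0$ means they induce the \emph{same} orientation in the $(z_4,\ldots,z_n)$ coordinates, whence $K$ enters the sum (\ref{hru}) twice with the same sign and survives modulo $2$; no sign change means the two contributions cancel.

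The main obstacle I anticipate is the careful $\varepsilon$-bookkeeping in the exponent: I must check that relocating one of the three distinguished points from the $l$-side to the $k$-side indeed toggles the count from ``opposite component'' to ``the component containing the third point,'' and that no spurious extra factor of $\varepsilon$ (or of a sign like $(-1)^{m+1}$) spoils the parity. A secondary point requiring attention is the nondegeneracy of $J_2$ near $\varepsilon=0$: I would confirm, as in (\ref{eq:J>}), that the non-$\varepsilon$ factor stays bounded away from zero and of constant sign for small $\varepsilon$, so that the sign of $J$ is genuinely governed by the power of $\varepsilon$ and the parametrization is a valid local coordinate system. Once the exponent's parity is pinned down, the rest of the argument is a verbatim repetition of the orientation-cancellation reasoning used in Lemmas \ref{K12345} and \ref{lem1comp}.
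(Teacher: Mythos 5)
Your overall strategy is exactly the paper's: hyperbola coordinates $xy=\ep$, a block-triangular Jacobian for the change to the $z$-coordinates, and the parity of the power of $\ep$ in $J$ deciding whether the two maximal cells adjacent to $K$ induce the same or opposite orientations, hence whether $K$ survives in the sum (\ref{hru}). The one place where you genuinely deviate from the paper is the gauge choice on the component containing the third point (say the point $3$): the paper makes point $3$ itself one of the two anchored points, setting $k_{j_1}=3$ with ordinate $-1$ (so abscissa $-\ep$), which forces $f(t)=(2+\ep)\frac{t-1}{t+\ep}$ to depend on $\ep$ alone and collapses the Jacobian to a lower-triangular matrix with the single corner entry $\partial z_{k_{j_2}}/\partial\ep=(4-\ep^2)/\ep^2$; you instead keep $y_3$ free and anchor two \emph{other} points $k_{j_1},k_{j_2}$ of that component, reproducing the $2\times 2$ corner block of Lemma \ref{lem1comp}. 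Where your coordinates exist, your anticipated bookkeeping does check out: one computes $\det F_1\sim -8y_3^2\,\ep^{-3}/\bigl((y_3+1)^2(y_3+2)^2\bigr)$, the $D_1$ entries have finite nonzero limits, and each of the $n-m-3$ remaining $k$-side diagonal entries is $\sim C_j/\ep$, so $J\sim C/\ep^{\,n-m}$ with $C$ of constant sign — the same exponent as in (\ref{eq:J}), giving the stated parity criterion.

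The genuine gap is that your gauge requires the component containing point $3$ to carry at least \emph{three} marked points: point $3$ (kept free) plus the two anchored points $k_{j_1},k_{j_2}$. Stability only guarantees two marked points on that component, and the cells where it has exactly two are squarely within the scope of the lemma: for example $K_{1245\vert 36}$ for $n=6$, and analogous cells for every $n\ge 6$. For these cells $n-m=2$ is even, so one must \emph{prove} they are not in $\Th(\oMn)$; but your coordinate system simply does not exist there (there are no two points besides $3$ whose ordinates can be pinned to $-1,-2$), so your argument says nothing about them. This is why the paper makes the opposite choice: anchoring point $3$ itself needs only one further marked point on that side, which stability always provides, and it covers all cases at once with a simpler ($1\times 1$ corner) computation. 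Your plan becomes a complete proof once you either adopt that normalization throughout, or keep your setup for $n-m\ge 3$ and handle the two-marked-point components separately by the paper's method.
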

\begin{proof}
Applying Lemma \ref{lem123}, up to the rename of the marked points we can and we do set  $l_{i_1}=1,l_{i_2}=2$, and $k_{j_1}=3$. Then we obtain that the abscissas of the points 1 and 2 are 1 and 2, correspondingly, and the ordinate of the point  3 is $-1$. We denote the abscissas of the points $l_i\in \{l_1,\ldots,l_m\}\setminus \{1,2\}$ by $x_{l_i}$, here $m\ge 2$.
Let us denote ordinates of the points $k_j\in\{ k_1,\ldots,k_{n-m}\}\setminus\{k_{j_1},k_{j_2}\}$ by $y_{k_j}$. 

We find the transition functions from the chosen coordinates  $x$, $y$, $\ep$ to the coordinates  $z$, introduced in Definition~\ref{DefKoord}.

Point-wise the change of coordinates looks as follows:
$$
\begin{smallmatrix} i& 1&2&3& \ldots & l_i& \ldots&k_j& \ldots& k_{j_2} \\ z-\mbox{coordinates } &0&1&\infty& \ldots &z_{l_i}& \ldots&z_{k_j}& \ldots &z_{k_{j_2}} \\ x-\mbox{coordinates } &1&2&-\ep &\ldots& x_{l_i}& \ldots & \frac{\ep}{y_{k_j}}& \ldots  & -\frac{\ep}{2} \\ y-\mbox{coordinates } &\ep&\frac{\ep}2& -1& \ldots &\frac{\ep}{x_{l_i}}& \ldots&y_{k_j}& \ldots&-2 \end{smallmatrix}$$ 

We are looking for the linear-fractional transformation  
$f(t)=\frac{at+b}{ct+d}$  which maps  $x,y,\ep$-coordinates of the points 1, 2 and 3 to their $z$-coordinates. Then $f(t)=(2+\ep)\frac{t-1}{t+\ep}$.

We compute the Jacobian of the change of the coordinates $\ep, x_{l_i},y_{k_j}$ by the coordinates   $z_4,\ldots, z_n$, writing rows and columns of the matrix of partial derivatives in the following order.
The rows: $z_{k_{j_2}} $, $z_{l_1}, \ldots, z_{l_m}$, $z_{k_1}, \ldots, z_{k_{n-m}}$ (without  $z_{l_{i_1}},\ z_{l_{i_2}},\ z_{k_{j_1}}$, $z_{k_{j_2}} $). The columns: $\ep$, $x_{l_1}, \ldots, x_{l_m}$, $y_{k_1}, \ldots, y_{k_{n-m}}$ (without  $x_{l_{i_1}},\ x_{l_{i_2}},\ y_{k_{j_1}}$, $y_{k_{j_2}} $, which are fixed). As in the previous lemma we note that $\frac{\partial z_{l_i}}{\partial x_{l_s}}=0$ if $s\ne i$, $\frac{\partial z_{l_i}}{\partial y_{k_t}}=0$ for all $i$ and $t$, $\frac{\partial z_{k_j}}{\partial x_{l_s}}=0$  for all $j$ and $s$, also $\frac{\partial z_{k_j}}{\partial y_{k_t}}=0$  if $t\ne j$.  Then the Jacobi matrix of this change of the coordinates is triangular with zeros above the diagonal:
$$ F=\left(\begin{array}{cc}\frac{\partial z_{k_{j_2}}}{\partial \ep}& 0 \\ * & D  \end{array}\right),$$
where $*$ denotes unknown  elements of the first column, the first row is zero, except the entry in the position  $(1,1)$, and $D$ is a diagonal   $ (n-4) \times  (n-4)$ matrix. First $(m-2) $ diagonal entries of $D$ have the form $ \frac{\partial z_{l_{i}}}{\partial  x_{l_{i}}} $, next  $ (n-m-2) $ entries have the form $ \frac{\partial z_{k_{j}}}{\partial  y_{k_{j}}} $.

Let us compute all the factors separately:

1. $ \frac{\partial z_{k_{j_2}}}{\partial \ep}= \frac{4-\ep^2}{\ep^2}$.

2.  $ \frac{\partial z_{l_{i}}}{\partial  x_{l_{i}}} = \frac{(2+\ep)(1+\ep)}{(x_{l_i}+\ep)^2}. $

3. $ \frac{\partial z_{k_{j}}}{\partial  y_{k_{j}}} =-\frac{(2+\ep)(1+\ep)}{\ep(y_{k_j}+1)^2}. $

Then 
\begin{equation} \label{eq:J}  J= (-1)^{n-m-1} \frac1{\ep^{n-m}} (2+\ep)^{n-4} (1+\ep)^{n-4} \prod_{i=3}^m \frac1{(x_{l_i}- \ep)^2} \cdot \prod_{j\in \{1,\ldots,n-m\}\setminus \{3,j_2\}}  \frac1{(1+y_{k_j})^2}.\end{equation}
Note that in the neighborhood of $\ep=0$  the sign of the Jacobian is determined by the factor $1/\ep^{n-m}$ only.

Formula (\ref{eq:J}) implies that if $(n-m)$ is even, then the Jacobian $J$ does not change the sign going from  $\ep<0$ to $\ep>0$, and if $(n-m)$ is odd then $J$ changes the sign. 

Thus, repeating the arguments at the end of Lemma \ref{lem1comp} we obtain that if  $(n-m)$ is even $K$ is not in the class  $\Th(\overline{{\mathcal M}_{0,n}^\R})$, and if  $(n-m)$ is odd, then $K$ is in this class, as required. 

\end{proof}

Consequent application of these lemmas concludes the proof of Theorem \ref{them1}.

\section*{Acknowledgments}

The authors would like to express their deep gratitude to Sergei M. Natanzon for paying their attention to the problems related to real moduli spaces of algebraic curves, and for his fruitful comments and discussions.


(N. Ya. Amburg)

 ITEP, Moscow, 117218, Russia

 National Research University Higher School of Economics, 
International Laboratory of Representation 
Theory and Mathematical Physics,
20 Myasnitskaya Ulitsa, Moscow 101000, Russia 

\bigskip

(E. M. Kreines)

 Lomonosov Moscow State University, Moscow, Russia

ITEP, Moscow, 117218, Russia

\end{document}